\newtheorem{thm}{Theorem}[section]
\newtheorem{lemma}[thm]{Lemma}
\newtheorem{propn}[thm]{Proposition}
\newtheorem{cor}[thm]{Corollary}
\newtheorem*{thm*}{Theorem}
\newtheorem*{conjecture*}{Conjecture}
\newtheorem*{prethm*}{Pretheorem}
\theoremstyle{definition}
\newtheorem{defn}[thm]{Definition}
\newtheorem{ex}[thm]{Example}
\newtheorem{remark}[thm]{Remark}
\theoremstyle{remark}
\author{Rune Haugseng}
\address{University of Copenhagen, Copenhagen, Denmark}
\email{haugseng@math.ku.dk}
\urladdr{http://sites.google.com/site/runehaugseng/}
\date{\today}
\def\amsbb{\use@mathgroup \M@U \symAMSb}
\renewcommand{\mathbb}[1]{\amsbb{#1}}
\newcommand{\simp}{\bbDelta}
\newcommand{\blank}{\text{\textendash}}
\newcommand{\isoto}{\xrightarrow{\sim}}
\newcommand{\IFF}{if and only if}
\newcommand{\catname}[1]{\ensuremath{\text{\textup{#1}}}}
\newcommand{\txt}[1]{\ensuremath{\text{\textup{#1}}}}
\newcommand{\Set}{\catname{Set}}
\newcommand{\Cat}{\catname{Cat}}
\newcommand{\Fun}{\txt{Fun}}
\newcommand{\Map}{\txt{Map}}
\newcommand{\op}{\txt{op}}
\newcommand{\icat}{$\infty$-category}
\newcommand{\icats}{$\infty$-categories}
\newcommand{\xto}[1]{\xrightarrow{#1}}
\newcommand{\from}{\leftarrow}
\newcommand{\csquare}[8]{ %
\[ %
\begin{tikzpicture} %
\matrix (m) [matrix of math nodes,row sep=3em,column sep=2.5em,text height=1.5ex,text depth=0.25ex] %
{ #1 \pgfmatrixnextcell #2 \\ %
  #3 \pgfmatrixnextcell #4 \\ }; %
\path[->,font=\footnotesize] %
(m-1-1) edge node[auto] {$#5$} (m-1-2)%
(m-1-1) edge node[left] {$#6$} (m-2-1)%
(m-1-2) edge node[auto] {$#7$} (m-2-2)%
(m-2-1) edge node[below] {$#8$} (m-2-2);%
\end{tikzpicture}%
\]%
}
\newcommand{\smallcsquare}[8]{ %
\[ %
\begin{tikzpicture} %
\matrix (m) [matrix of math nodes,row sep=1.5em,column sep=1.25em,text height=1.5ex,text depth=0.25ex] %
{ #1 \pgfmatrixnextcell #2 \\ %
  #3 \pgfmatrixnextcell #4 \\ }; %
\path[->,font=\footnotesize] %
(m-1-1) edge node[auto] {$#5$} (m-1-2)%
(m-1-1) edge node[left] {$#6$} (m-2-1)%
(m-1-2) edge node[auto] {$#7$} (m-2-2)%
(m-2-1) edge node[below] {$#8$} (m-2-2);%
\end{tikzpicture}%
\]%
}
\newcommand{\nolabelcsquare}[4]{\csquare{#1}{#2}{#3}{#4}{}{}{}{}}
\newcommand{\nolabelsmallcsquare}[4]{\smallcsquare{#1}{#2}{#3}{#4}{}{}{}{}}
\newcommand{\smallopctriangle}[6]{ %
\[ %
\begin{tikzpicture} %
\matrix (m) [matrix of math nodes,row sep=1.5em,column sep=1em,text height=1.5ex,text depth=0.25ex] %
{  #1 \pgfmatrixnextcell \pgfmatrixnextcell #2 \\ %
  \pgfmatrixnextcell #3 \pgfmatrixnextcell \\ %
}; %
\path[->,font=\footnotesize] %
(m-1-1) edge node[above] {$#4$} (m-1-3)%
(m-1-1) edge node[below left] {$#5$} (m-2-2)%
(m-1-3) edge node[below right] {$#6$} (m-2-2);%
\end{tikzpicture}%
\]%
}
\newcommand{\id}{\txt{id}}
\DeclareMathOperator{\colimP}{colim}
\newcommand{\colim}{\mathop{\colimP}}
\DeclareMathOperator{\ob}{ob}
\title{On the Equivalence between $\bbTheta_{n}$-Spaces and Iterated Segal Spaces}
\newcommand{\bbT}{\bbTheta}
\newcommand{\bbTn}{\bbT_{n}}
\newcommand{\bbTni}{\bbT_{n,\mathrm{i}}}
\newcommand{\bbTnop}{\bbTn^{\op}}
\newcommand{\bbTniop}{\bbTni^{\op}}
\newcommand{\Di}{\simp_{\mathrm{i}}}
\newcommand{\Dni}{\Di^{n}}
\newcommand{\PSeg}{\mathcal{P}_{\txt{Seg}}}
\newcommand{\PrSeg}{\mathcal{P}_{\txt{rSeg}}}
\newcommand{\PR}{\mathcal{P}_{\txt{r}}}
\newcommand{\bbG}{\mathbb{G}}
\newcommand{\bbGn}{\bbG_{n}}
\newcommand{\bbGnop}{\bbGn^{\op}}
\newcommand{\bbGnI}{\bbG_{n/I}}
\newcommand{\Act}{\txt{Act}}
\begin{document}
\begin{abstract}
  We give a new proof of the equivalence between two of the main
  models for $(\infty,n)$-categories, namely the $n$-fold Segal spaces
  of Barwick and the $\bbTn$-spaces of Rezk, by proving that these are
  algebras for the same monad on the \icat{} of $n$-globular
  spaces. The proof works for a broad class of \icats{} that includes
  all $\infty$-topoi.
\end{abstract}
\maketitle

\section{Introduction}
$(\infty,n)$-categories are a homotopical version of
$n$-categories. This means that they have $i$-morphisms between
$(i-1)$-morphisms for $i = 1,\ldots,n$ and also homotopies between
$n$-morphisms, homotopies of homotopies, etc.\ (in other words,
invertible $i$-morphisms for $i > n$), with composition of
$i$-morphisms only associative up to a coherent choice of higher
homotopies. There are now a number of good models for
$(\infty,n)$-categories; the two that have seen the most use so
far are \emph{$n$-fold Segal spaces} and
\emph{$\bbTn$-spaces}. Iterated Segal spaces were first defined in
Barwick's thesis \cite{BarwickThesis}, building on Rezk's work on
\emph{Segal spaces} \cite{RezkCSS}, and were later generalized by
Lurie \cite{LurieGoodwillie}*{\S 1} to the setting of $\infty$-topoi;
they are presheaves of spaces on the category $\simp^{n}$ satisfying
iteratively defined ``Segal conditions'' and constancy
conditions. $\bbTn$-spaces, which were introduced by
Rezk~\cite{RezkThetaN} (no doubt influenced by Joyal's unpublished
work on $\bbTn$-sets and Berger's description of $n$-fold loop spaces
\cite{BergerWreath}), are similarly presheaves of spaces on categories
$\bbTn$ that satisfy certain ``Segal conditions''; in this paper we
consider their natural generalization to $\infty$-topoi, which we will
refer to as \emph{Segal $\bbTn$-objects} for clarity.

In \cite{BarwickSchommerPriesUnicity}, Barwick and Schommer-Pries give
axioms that characterize the homotopy theory of
$(\infty,n)$-categories. They also prove that these axioms are
satisfied in the case of $n$-fold Segal spaces and $\bbTn$-spaces,
which implies that these two models are equivalent. Another
comparison, which relates the two models directly in the setting of
model categories, has been given more recently by Bergner and Rezk
\cite{BergnerRezk2}.

The goal of this short paper is to give a new, conceptual proof of
this equivalence: we will show that both models are the \icats{} of
algebras for a monad on the \icat{} of $n$-globular spaces
(i.e.\ presheaves of spaces on the $n$-globular category,
cf.\ Definition~\ref{defn:bbGn}), and that these two monads are
equivalent. This also brings out the relation between
$(\infty,n)$-categories and $n$-categories: strict $n$-categories are
the algebras for the analogous monad on the category of $n$-globular
\emph{sets}.

Our proof only makes use of formal properties of the \icat{} of spaces
that hold for all $\infty$-topoi, so we obtain a comparison between
iterated Segal objects and Segal $\bbT_{n}$-objects in any
$\infty$-topos $\mathcal{X}$. In fact, our comparison works in
sufficient generality to allow us to conclude iteratively
that Segal $\bbT_{n_{1}+\cdots+n_{k}}$-objects in
$\mathcal{X}$ are equivalent to Segal
$\bbT_{n_{1}} \times \cdots \times \bbT_{n_{k}}$-objects that are
\emph{reduced} (i.e.\ satisfy certain constancy conditions). 

In this paper we focus on the ``algebraic'' theory of
$(\infty,n)$-categories, i.e.\ we do not invert the appropriate class
of ``fully faithful and essentially surjective morphisms''. However,
this localization is given for both $n$-fold Segal spaces and
$\bbTn$-spaces by restricting to subcategories of \emph{complete}
objects, and our equivalence is easily seen to restrict to an
equivalence between these subcategories.  

\subsection{Notation}
This paper is written in the language of \icats{}, and we reuse some
of the terminology and notation of \cite{HTT,HA} without
comment. Moreover, we use the $\infty$-categorical terminology
uniformly, even when the \icats{} in question are ordinary categories.
In particular, for us a \emph{cofinal} functor
$F \colon \mathcal{C} \to \mathcal{D}$ between ordinary categories is
a functor that is cofinal in the $\infty$-categorical sense (also
called \emph{homotopy cofinal}). Similarly, we will speak of
(co)Cartesian fibrations between categories rather than Grothendieck
(op)fibrations.

If $\mathcal{C}$ and $\mathcal{X}$ are \icats{} we will write
$\mathcal{P}(\mathcal{C}; \mathcal{X})$ for the \icat{}
$\Fun(\mathcal{C}^{\op}, \mathcal{X})$ of presheaves on $\mathcal{C}$
valued in $\mathcal{X}$.


\section{$\bbTn$-Objects and Segal Conditions}\label{sec:Tn}
In this section we will define our main objects of study in this
paper: \emph{reduced Segal $\bbTn$-objects}, which are certain
presheaves on categories $\bbTn$, whose definition we will now recall;
these categories were originally introduced by Joyal, but here we make
use of the inductive reformulation of the definition due to
Berger~\cite{BergerWreath}*{Definition 3.1}.
\begin{defn}
  The category $\bbTn$ is defined inductively as follows: First set
  $\bbT_{0} := *$. Then define $\bbT_{n}$ to be the category with
  objects $[i](I_{1},\ldots, I_{i})$ with $[i] \in \simp$ and
  $I_{p} \in \bbT_{n-1}$; a morphism
  $[i](I_{1},\ldots, I_{i}) \to [j](J_{1},\ldots,J_{j})$ is given by a
  morphism $\phi \colon [i] \to [j]$ in $\simp$ and morphisms
  $\psi_{pq} \colon I_{p}\to J_{q}$ in $\bbT_{n-1}$ where
  $0 < p \leq i$ and $\phi(p-1) < q \leq \phi(p)$. The composite of
  $(\phi, \psi_{pq}) \colon [i](I_{1},\ldots,I_{i}) \to
  [j](J_{1},\ldots,J_{j})$
  and
  $(\phi', \psi'_{qr} \colon [j](J_{1},\ldots,J_{j}) \to
  [k](K_{1},\ldots,K_{k})$
  is $(\phi' \circ \phi, \psi''_{pr})$ where
  $\psi''_{pr} := \psi'_{qr} \circ \psi_{pq}$ where $q$ is the unique
  index with $\phi(p-1)<q\leq \phi(p)$ such that
  $\phi'(q-1)< r \leq \phi'(q)$. If $\mathcal{X}$ is an \icat{}, we
  will refer to presheaves $\bbTn^{\op} \to \mathcal{X}$ as
  \emph{$\bbTn$-objects} in $\mathcal{X}$.
\end{defn}

There is a useful factorization system on $\bbTn$ given by the inert
and active morphisms, in the following sense:
\begin{defn}
  A morphism $\phi \colon [n] \to [m]$ in $\simp$ is
  \emph{inert} if it is the inclusion of a subinterval in $[m]$,
  i.e.\ $\phi(i) = \phi(0)+i$ for all $i$, and \emph{active} if it
  preserves the endpoints, i.e.\ $\phi(0) = 0$ and $\phi(n) = m$. We
  then inductively say a morphism $(\phi, \psi_{ij})$ in $\bbTn$ is \emph{inert} if $\phi$ is
  inert in $\simp$ and each $\psi_{ij}$ is inert in $\bbT_{n-1}$, and
  \emph{active} if $\phi$ is active in $\simp$ and each $\psi_{ij}$ is
  active in $\bbT_{n-1}$. We write $\bbTni$ for the subcategory of
  $\bbTn$ containing only the inert maps and $i_{n} \colon \bbTni \to
  \bbTn$ for the inclusion.
\end{defn}
Every morphism in $\bbTn$ can be factored as an active morphism
followed by an inert morphism --- moreover, as objects of $\bbTn$ have
no non-trivial automorphisms, this factorization is \emph{strictly}
unique. This factorization system seems to have been first constructed
by Berger --- it is a special case of \cite{BergerNerve}*{Lemma 1.11} (where
the inert maps are called \emph{immersions} and the active ones
\emph{covers}). It is also a special case of
\cite{WeberFamilial}*{Proposition 4.20} and of
\cite{BarwickOpCat}*{Lemma 8.3}; moreover, using the inductive
definition of $\bbTn$ it is easy to check directly by hand.

The objects of $\bbTn$ can be thought of as $n$-dimensional pasting
diagrams for compositions in $n$-categories. We now wish to define the
appropriate \emph{Segal conditions} for $\bbTn$-objects that make
their values at such a pasting diagram decompose appropriately as a
limit of the values at the basic $i$-morphisms ($i =
0,\ldots,n$).
These were originally specified by Rezk~\cite{RezkThetaN}, but we will
use an alternative formulation influenced by the work of Barwick on
operator categories \cite{BarwickOpCat}; this is also a special case of
the general version of Segal conditions considered in
\cite{WeberFamilial}. The definition requires introducing some notation:
\begin{defn}\label{defn:bbGn}
  We define objects $C_{i} \in \bbTn$ for $i = 0,\ldots,n$ by
  $C_{0} := [0]()$ and $C_{i} = [1](C_{i-1})$ for $i > 0$. (For
  $n = 0$, we let $C_{0}$ denote the unique object of $\bbT_{0} = *$.)
  Let $\bbGn$, the \emph{$n$-globular category}, be the full
  subcategory of $\bbTni$ containing the objects
  $C_{0}, \ldots, C_{n}$; we write $\gamma_{n}$ for the inclusion
  $\bbGn \hookrightarrow \bbTni$. We can informally depict the
  category $\bbGn$ as
  \[ C_{0} \rightrightarrows C_{1} \rightrightarrows \cdots
  \rightrightarrows  C_{n}.\]
  We refer to the object $C_{k}$ as the \emph{$k$-cell}.
  Given $I \in \bbTn$, we will write $\bbGnI$ for the category $\bbGn
  \times_{\bbTni} (\bbTni)_{/I}$, and refer to its objects as the
  \emph{cells} of $I$.
\end{defn}

\begin{defn}
  Suppose $\mathcal{X}$ is a presentable \icat{}. A presheaf $F \colon
  \bbTnop \to \mathcal{X}$ is a \emph{Segal $\bbTn$-object} if its
  restriction $F|_{\bbTniop}$ is the right Kan extension along
  $\gamma_{n}$ of its restriction to $\bbGnop$ --- in other words, for
  $I$ in $\bbTn$ the natural map $F(I) \to \lim_{C \in \bbGnI^{op}}
  F(C)$ is an equivalence. We write $\PSeg(\bbTn; \mathcal{X})$ for
  the full subcategory of $\mathcal{P}(\bbTn; \mathcal{X})$ spanned by
  the Segal $\bbTn$-objects, and $\PSeg(\bbTni; \mathcal{X})$ for the
  analogous subcategory of $\mathcal{P}(\bbTni; \mathcal{X})$; these
  are accessible localizations of $\mathcal{P}(\bbTn; \mathcal{X})$
  and $\mathcal{P}(\bbTni; \mathcal{X})$, respectively.
\end{defn}

This is equivalent to the inductive Segal condition given, for example, in
\cite{RezkThetaN}:
\begin{propn}\label{propn:indSegcond}
  $F \in \mathcal{P}(\bbTn; \mathcal{X})$ is a Segal presheaf \IFF{}
  the following conditions hold:
  \begin{enumerate}[(1)]
  \item For every object $I = [i](I_{1},\ldots,I_{i})$ ($i \neq 0$),
    the natural map
    \[ F(I) \to F([1](I_{1})) \times_{F(C_{0})} \cdots
    \times_{F(C_{0})} F([1](I_{i})\]
    is an equivalence.
  \item The presheaf $F([1](\blank)) \colon \bbT_{n-1}^{\op} \to
    \mathcal{X}$ is a Segal $\bbT_{n-1}$-object.
  \end{enumerate}
\end{propn}


It is convenient for us to prove Proposition~\ref{propn:indSegcond}
using a result about general limit decompositions for Segal objects, which we turn to first.
\begin{defn}
  Suppose $f \colon I \to J$ is an active morphism in $\bbTn$. For
  $\alpha \colon C \to I$ in $\bbGnI$, let $C \xto{f_{\alpha}}
  J_{\alpha} \xto{i_{\alpha}} J$ be the (unique) active-inert
  factorization of $f \circ \alpha \colon C \to J$. Given a 
  factorization of $\alpha$ as $C \xto{\xi} C' \xto{\alpha'} I$ with
  $\xi$ inert, the composite $C \to
  C' \to J_{\alpha'}$ has an active-inert factorization $C \to X \to
  J_{\alpha'}$. Since this also gives an active-inert factorization of
  $C \to J_{\alpha'} \to J$ we see that $X = J_{\alpha}$, and so $\xi$ determines an inert
  morphism $J_{\alpha} \to J_{\alpha'}$. We thus get a functor
  $\bbGnI \to \Cat$ by sending $\alpha$ to $\bbG_{n/J_{\alpha}}$
  and a morphism in $\bbGnI$ to the functor given by composition with
  the associated inert morphism $J_{\alpha} \to J_{\alpha'}$.
  Let $\bbG_{n/f} \to \bbGnI$ denote the corresponding coCartesian
  fibration. Composition with the inert morphisms $J_{\alpha} \to J$
  gives a functor $\bbG_{n/f} \to \bbG_{n/J}$.
\end{defn}

\begin{propn}\label{propn:Gnfeq}
  For any active morphism $f \colon I \to J$ in $\bbTn$, the functor
  $\bbG_{n/f} \to \bbG_{n/J}$ is cofinal.
\end{propn}

Before we prove this, we make the following simple observation:
\begin{lemma}\label{lem:Cartcof}
  Suppose $p \colon \mathcal{E} \to \mathcal{B}$ is a Cartesian
  fibration. Then $p$ is cofinal \IFF{} the fibres $\mathcal{E}_{b}$
  are weakly contractible for all $b \in \mathcal{B}$.
\end{lemma}
\begin{proof}
  By \cite{HTT}*{Theorem 4.1.3.1}, the functor $p$ is cofinal \IFF{}
  the \icats{}
  $\mathcal{E}_{b/} := \mathcal{E} \times_{\mathcal{B}}
  \mathcal{B}_{b/}$
  are weakly contractible for all $b \in\mathcal{B}$. If $p$ is
  Cartesian, for every $b \in \mathcal{B}$ the functor
  $\mathcal{E}_{b} \to \mathcal{E}_{b/}$ is coinitial (i.e.\ the op'ed
  functor is cofinal): for an object
  $\epsilon = (e \in \mathcal{E}, f \colon b \to p(e))$ in
  $\mathcal{E}_{b/}$, the \icat{} $(\mathcal{E}_{b})_{/\epsilon}$ has
  a terminal object, given by a Cartesian morphism over $f$ with
  target $e$. This functor is therefore in particular a weak homotopy
  equivalence by \cite{HTT}*{Lemma 4.1.1.3(3)}.
\end{proof}

\begin{proof}[Proof of Proposition~\ref{propn:Gnfeq}]
  The projection $\bbG_{n/f} \to \bbG_{n/J}$ is a Cartesian fibration,
  so by Lemma~\ref{lem:Cartcof} it suffices to show that for $\gamma
  \colon C \to J$, the fibre $(\bbG_{n/f})_{\gamma}$ is weakly
  contractible; we will prove this by induction on $n$. The category
  $(\bbG_{n/f})_{\gamma}$ consists of diagrams
  \[
  \begin{tikzcd}
    {} & C' \arrow{r}{\alpha} \arrow{d}{f_{\alpha}} & I \arrow{d}{f} \\
    C \arrow{r}{\gamma_{\alpha}}  \arrow[bend right=40]{rr}{\gamma} & J_{\alpha} \arrow{r}{i_{\alpha}} & J,
  \end{tikzcd}
  \]
  where $f_{\alpha}$ is active and $i_{\alpha}$ is inert. Since inert
  maps are monomorphisms in $\bbTn$, we may identify
  this with the full subcategory of $\bbG_{n/I}$ spanned by those
  cells $\alpha \colon C' \to I$ such that $\gamma$ factors through
  $i_{\alpha}$.

  First consider the case where $n = 1$. Then $\gamma$ is a map $[a]
  \to J$ where $a$ is either $0$ or $1$. If $a = 1$, then there is a
  \emph{unique} cell $\alpha \colon [1] \to I$ such that $\gamma$
  factors through $i_{\alpha}$, namely that where $f(\alpha(0)) \leq
  \gamma(0)$ and $f(\alpha(1)) \geq \gamma(1)$. Thus
  $(\bbG_{1/f})_{\gamma} = *$. On the
  other hand, if $a = 0$, then $(\bbG_{1/f})_{\gamma}$ is the full
  subcategory of $\bbG_{1/I}$ spanned by the inert maps $\alpha \colon
  [0] \to I$ such that $f(\alpha(0)) = \gamma(0)$ and $\alpha \colon
  [1] \to I$ such that $f(\alpha(0)) \leq \gamma(0)$ and $f(\alpha(1))
  \geq \gamma(0)$. The nerve of this category is an iterated pushout of
  $\Delta^{1}$'s along inclusions $\Delta^{0} \hookrightarrow
  \Delta^{1}$, and so is weakly contractible, as required.

  Now suppose the result is known for $n-1$. The cell $C$ is either $[0]()$ or
  $[1](\tilde{C})$ where $\tilde{C}$ is a cell in $\bbT_{n-1}$. If $C
  = [1](\tilde{C})$ then the underlying diagram in $\simp$ is
  unique and of the
  form
  \[
  \begin{tikzcd}
    {} & {[1]} \arrow{r}{\alpha_{0}} \arrow{d} & {[i]} \arrow{d}{f_{0}} \\
    {[1]} \arrow{r}  \arrow[bend right=40]{rr}{\gamma_{0}} & {[l]} \arrow{r} & {[j]}.
  \end{tikzcd}
  \]
  Thus $C'$ must also be of the form
  $[1](\tilde{C}')$. Let $\tilde{J} := J_{\gamma_{0}(1)}$ where $J =
  [j](J_{1},\ldots,J_{j})$, let $\tilde{\gamma}$ denote the map
  $\tilde{C} \to \tilde{J}$ induced by $\gamma$, set
$\tilde{I} := I_{\alpha_{0}(1)}$ where $I
  = [i](I_{1}, \ldots, I_{i})$, and take $\tilde{f}$ to be the induced map
  $\tilde{I} \to \tilde{J}$. Then the category $(\bbG_{n/f})_{\gamma}$
  can be identified with $(\bbG_{n-1/\tilde{f}})_{\tilde{\gamma}}$,
  which is weakly contractible by assumption.

  If $C = [0]()$, consider the projection $(\bbG_{n/f})_{\gamma} \to
  (\bbG_{1/f_{0}})_{\gamma_{0}}$ given by taking the underlying maps
  in $\simp$. This is a Cartesian fibration, so using
  Lemma~\ref{lem:Cartcof} again, it suffices to show that the fibres
  $(\bbG_{n/f})_{\gamma,\Xi}$ at $\Xi \in
  (\bbG_{1/f_{0}})_{\gamma_{0}}$ are weakly contractible. The object
  $\Xi$ is a diagram
  \[
  \begin{tikzcd}
    {} & {[a]} \arrow{r}{\alpha_{0}} \arrow{d} & {[i]} \arrow{d}{f_{0}} \\
    {[0]} \arrow{r}  \arrow[bend right=40]{rr}{\gamma_{0}} & {[l]} \arrow{r} & {[j]}.
  \end{tikzcd}
  \]
  where $a = 0$ or $1$. If $a = 0$ then the fibre is $*$, and if $a = 1$ it may be identified
  with $\bbG_{n-1/I_{\alpha_{0}(1)}}$, which is again weakly
  contractible by assumption.
\end{proof}

Applying Proposition~\ref{propn:Gnfeq} to the unique (active) map $f
  \colon J \to [0]()$ we get as a special case:
\begin{cor}\label{cor:GnIwc}
  For each $I$ in $\bbTn$, the category $\bbG_{n/I}$ is weakly contractible.\qed
\end{cor}

\begin{cor}\label{cor:actSeg}
  Suppose $F \in \mathcal{P}(\bbTn; \mathcal{X})$ is a Segal
  object. Then for any active morphism $f \colon I \to J$, the natural
  map
  \[ F(J) \to \lim_{\alpha \in \bbGnI^{op}} F(J_{\alpha})\]
  is an equivalence.
\end{cor}
\begin{proof}
  Using the Segal conditions for $J_{\alpha}$ we have 
  \[ \lim_{\alpha \in \bbGnI^{op}} F(J_{\alpha}) \simeq \lim_{\alpha
    \in \bbGnI^{op}} \lim_{C \to J_{\alpha} \in \bbG_{n/J_{\alpha}}^{\op}}
  F(C).\] By \cite{enrbimod}*{Corollary 5.7} we can rewrite this limit
  as $\lim_{C\in \bbG_{n/f}^{\op}} F(C)$, and by Proposition~\ref{propn:Gnfeq} this
  limit is equivalent to $\lim_{C \to J \in \bbG_{n/J}^{\op}} F(C)$, which
  we know by the Segal condition for $J$ is equivalent to $F(J)$.
\end{proof}

\begin{proof}[Proof of Proposition~\ref{propn:indSegcond}]
  First suppose $F \colon \bbTnop \to \mathcal{X}$ is a Segal
  object. Given $I = [i](I_{1},\ldots,I_{i})$ ($i \neq 0$), set
  $\tilde{I} := [i](C_{n-1},\ldots,C_{n-1})$ and let $f \colon
  \tilde{I} \to I$ denote the (active) map given by the identity $[i]
  \to [i]$ and the unique active maps $C_{n-1}\to I_{p}$. If $\Lambda$
  denotes the full subcategory of $\bbG_{n/\tilde{I}}$ containing the $i$
  maps $C_{n} \to \tilde{I}$ and the $i+1$ maps $C_{0} \to
  \tilde{I}$, then the inclusion $\Lambda \hookrightarrow \bbG_{n/\tilde{I}}$
  is cofinal. Together with Corollary~\ref{cor:actSeg} this gives an
  equivalence
  \[ F(I) \isoto \lim_{\alpha \in \bbG_{n/\tilde{I}}^{\op}}
  F(I_{\alpha}) \isoto F([1](I_{1})) \times_{F(C_{0})} \cdots
  \times_{F(C_{0})} F([1](I_{i})),\]
  which is condition (1). Condition (2) holds since the functor
  $\bbG_{n-1/J} \to \bbG_{n/[1](J)}$ induced by $[1](\blank) \colon
  \bbT_{n-1} \to \bbT_{n}$ is cofinal.

  Now suppose conditions (1) and (2) hold for $F$; we then wish to
  show that $F(I) \to \lim_{C \to I \in \bbG_{n/I}^{\op}} F(C)$ is an
  equivalence for
  any $I = [i](I_{1},\ldots,I_{i}) \in \bbTn$. With $f \colon \tilde{I} \to I$ as above,
  Proposition~\ref{propn:Gnfeq} implies that it suffices to prove that
  the natural map $F(I) \to \lim_{\bbG_{n/f}^{\op}} F$ is an
  equivalence. Using \cite{enrbimod}*{Corollary 5.7} we can rewrite
  this as an iterated limit $\lim_{\alpha \in \bbG_{n/\tilde{I}}}
  \lim_{\bbG_{n/I_{\alpha}}} F$. But now using the same cofinal
  functors as above, we can rewrite this again as
  \[ \left(\lim_{C \to I_{1} \in \bbG_{n-1/I_{1}}} F([1](C))\right)
  \times_{F(C_{0})} \cdots \times_{F(C_{0})} \left(\lim_{C \to I_{i} \in
    \bbG_{n-1/I_{i}}} F([1](C)) \right), \]
  which is equivalent to $F(I)$ by (1) and (2).
\end{proof}

For the \icat{} $\mathcal{S}$ of spaces, $\PSeg(\bbTn; \mathcal{S})$
is the \icat{} underlying Rezk's model category of $\bbTn$-spaces from
\cite{RezkThetaN}. More generally, if $\mathcal{X}$ is, say, an
$\infty$-topos, the \icat{} $\PSeg(\bbTn; \mathcal{X})$ gives the
(algebraic) \icat{} of internal $(\infty,n)$-categories in
$\mathcal{X}$. We would like to be able to iterate this definition, so
that we get a good definition of Segal $\bbT_{m}$-objects in
$\PSeg(\bbTn; \mathcal{X})$. Just as in Barwick's definition of
$n$-fold Segal spaces, this requires forcing some of the images to be
constant; to formalize this notion, it is convenient to introduce the
following technical definition:
\begin{defn}\label{defn:sble}
  A \emph{presentable \icat{} with good constants} is a pair
  $(\mathcal{X}, \mathcal{U})$ consisting of an \icat{} $\mathcal{X}$
  together with a full subcategory $\mathcal{U}$ satisfying the
  following requirements:
  \begin{enumerate}[(a)]
  \item $\mathcal{X}$ and $\mathcal{U}$ are both presentable.
  \item The inclusion $\mathcal{U} \hookrightarrow \mathcal{X}$
    preserves all limits and colimits (and hence, by the adjoint
    functor theorem, has both a left and a right adjoint).
  \item Coproducts in $\mathcal{U}$ are disjoint, i.e.\ for any two
    objects $U, U'\in \mathcal{U}$, the commutative square
    \nolabelsmallcsquare{\emptyset}{U}{U'}{U \amalg U'}
    is Cartesian.
  \item Coproducts over $\mathcal{U}$ are universal, i.e.\ for any
    morphism $f \colon X \to U$ in $\mathcal{X}$ with $U \in
    \mathcal{U}$, the functor $f^{*} \colon \mathcal{X}_{/U} \to
    \mathcal{X}_{/X}$, given by pullback along $f$, preserves the
    initial object and arbitrary coproducts.
  \end{enumerate}
\end{defn}

\begin{ex}
  If $\mathcal{X}$ is an $\infty$-topos, then $(\mathcal{X},
  \mathcal{X})$ is a presentable \icat{} with good constants by
  \cite{HTT}*{Theorem 6.1.0.6}.
\end{ex}

\begin{remark}
  Since we are requiring pullbacks over $\mathcal{U}$ to preserve
  \emph{all} coproducts in $\mathcal{X}$, not just coproducts in
  $\mathcal{U}$, a \emph{distributor} in the sense of
  Lurie~\cite{LurieGoodwillie}*{Definition 1.2.1} is not necessarily
  a presentable \icat{} with good constants. However, the key examples
  --- $\infty$-topoi and iterated $\bbTn$-objects in $\infty$-topoi
  --- are both distributors \emph{and} presentable \icats{} with good constants.
\end{remark}

\begin{defn}
  Suppose $(\mathcal{X}, \mathcal{U})$ is a presentable \icat{} with
  good constants. We say a presheaf $X \in \mathcal{P}(\bbGn;
  \mathcal{X})$ is \emph{reduced} if $X(C_{i})$ is in $\mathcal{U}$
  for all $i< n$; we write $\PR(\bbGn; \mathcal{X},
  \mathcal{U})$ for the full subcategory of $\mathcal{P}(\bbGn;
  \mathcal{X})$ spanned by the reduced objects. A Segal object $X$
  in $\PSeg(\bbTni; \mathcal{X})$ or $\PSeg(\bbTn;
  \mathcal{X})$ is then called \emph{reduced} if $X|_{\bbGn^{\op}}$ is
  reduced; we
  write $\PrSeg(\bbTn; \mathcal{X}, \mathcal{U})$ and $\PrSeg(\bbTni;
  \mathcal{X}, \mathcal{U})$ for the full subcategories of
  $\mathcal{P}(\bbTn; \mathcal{X})$ and $\mathcal{P}(\bbTni;
  \mathcal{X})$, respectively, spanned by the reduced Segal objects.
\end{defn}

\begin{propn}\label{propn:PrSeggoodconst}
  Suppose $(\mathcal{X}, \mathcal{U})$ is a presentable \icat{} with
  good constants.
  \begin{enumerate}[(i)]
  \item The \icat{} $\PrSeg(\bbTn; \mathcal{X}, \mathcal{U})$ is
    presentable, and the inclusion \[\PrSeg(\bbTn; \mathcal{X},
    \mathcal{U}) \hookrightarrow \mathcal{P}(\bbTn; \mathcal{X})\]
    admits a left adjoint $L_{n}$.
  \item The functor $c^{*} \colon \mathcal{U} \to \mathcal{P}(\bbTn;
    \mathcal{X})$ that takes an object in $\mathcal{U}$ to the
    constant presheaf with that value is fully faithful and takes
    values in $\PrSeg(\bbTn; \mathcal{X}, \mathcal{U})$.
  \item The pair $(\PrSeg(\bbTn; \mathcal{X}, \mathcal{U}),
    \mathcal{U})$, with $\mathcal{U}$ viewed as the full subcategory
    of constant presheaves, is a presentable \icat{} with good
    constants.
  \end{enumerate}
\end{propn}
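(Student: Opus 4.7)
\emph{Proof plan.} The plan is to handle (i), (ii), (iii) in turn; parts (i), (ii), and (iii)(a)--(c) are relatively formal, while (iii)(d) is the essential use of the good-constants hypothesis and will be the main obstacle.

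For (i), both the Segal conditions (standard) and the reduced condition describe accessible reflective subcategories of $\mathcal{P}(\bbTn; \mathcal{X})$: the reduced condition is accessible because $\mathcal{U} \hookrightarrow \mathcal{X}$ is itself an accessible localization, which follows from (a) and (b) of Definition~\ref{defn:sble} together with the adjoint functor theorem. The intersection of two accessible localizations is accessible, so $\PrSeg(\bbTn; \mathcal{X}, \mathcal{U})$ is presentable and inherits a left adjoint $L_{n}$ from the combined localization.

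For (ii), fully faithfulness of $c^{*}$ is immediate since the constant presheaf functor $\mathcal{X} \to \mathcal{P}(\bbTn; \mathcal{X})$ is fully faithful and $\mathcal{U} \hookrightarrow \mathcal{X}$ is. Reducedness of $c^{*}U$ is clear from $(c^{*}U)(C_{i}) = U \in \mathcal{U}$. The Segal condition amounts to $U \to \lim_{\bbGnI^{\op}} U$ being an equivalence, which reduces to weak contractibility of the cell category $\bbGnI$; this is straightforward by induction on $n$ (the poset of cells of an $n$-globular pasting diagram contracts onto its top-dimensional cell).

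For (iii), (a) is part of (i). For (b), observe that $[0]() \in \bbTn$ is terminal: for $[m](J_{1},\ldots,J_{m}) \to [0]()$ the map $\phi \colon [m] \to [0]$ is forced and the index range $\phi(i-1) < j \leq \phi(i)$ for the $\psi_{ij}$-data is empty. Hence for any reduced $Y$ we have $\lim_{\bbTn^{\op}} Y = Y(C_{0}) \in \mathcal{U}$, so evaluation at $C_{0}$ is right adjoint to $c^{*} \colon \mathcal{U} \to \PrSeg(\bbTn; \mathcal{X}, \mathcal{U})$; a left adjoint exists by the adjoint functor theorem. Thus $c^{*}$ preserves both limits and colimits, giving (b). For (c), applying the bicontinuous $c^{*}$ to the disjoint coproduct square for $U, U' \in \mathcal{U}$ produces the analogous Cartesian square in $\PrSeg(\bbTn; \mathcal{X}, \mathcal{U})$.

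Condition (d) is the main obstacle. Pullbacks in $\PrSeg(\bbTn; \mathcal{X}, \mathcal{U})$ coincide with those in $\mathcal{P}(\bbTn; \mathcal{X})$ (the inclusion is right adjoint to $L_{n}$) and are pointwise in $\mathcal{X}$, whereas coproducts in $\PrSeg(\bbTn; \mathcal{X}, \mathcal{U})$ are obtained by applying $L_{n}$ to pointwise coproducts. For $f \colon X \to c^{*} U$, pointwise distributivity in $\mathcal{P}(\bbTn; \mathcal{X})$ follows from condition (d) on $(\mathcal{X}, \mathcal{U})$ applied at each $I \in \bbTn$ to $X(I) \to U$. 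The technical heart is then to show that $L_{n}$ commutes with pullback along $f$, equivalently that this pullback takes $L_{n}$-equivalences to $L_{n}$-equivalences. This is where good constants on $(\mathcal{X}, \mathcal{U})$ is used essentially: one verifies that each generating Segal and reduced localizing morphism remains a local equivalence after pullback along $f$, because $f$ lands in a constant presheaf valued in $\mathcal{U}$ and the Segal maps are built from limits over the contractible cell categories $\bbGnI$, which allows the distributivity of condition (d) of Definition~\ref{defn:sble} to propagate to the localization. Combining this stability with pointwise distributivity yields universality of $\mathcal{U}$-coproducts in $\PrSeg(\bbTn; \mathcal{X}, \mathcal{U})$.
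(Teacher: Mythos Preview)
Your treatment of (i), (ii), and (iii)(a)--(c) is broadly correct and close in spirit to the paper's, though your justification for the weak contractibility of $\bbG_{n/I}$ in (ii) is not right: there is in general no single ``top-dimensional cell'' onto which $\bbG_{n/I}$ retracts (already $I=[2]()\in\bbT_{1}$ has two $1$-cells). The paper proves this weak contractibility separately (Lemma~\ref{lem:GnIwc}) by an inductive argument using a cofinality trick, and you should invoke that rather than the incorrect one-line claim.

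The substantive divergence is in (iii)(d). You propose to show that pullback along $f\colon X\to c^{*}U$ preserves $L_{n}$-equivalences, reducing to checking stability of generating Segal and reduced localizing maps under such pullbacks. This is not worked out in your sketch, and it is genuinely delicate: the generating $L_{n}$-equivalences are not maps between reduced Segal objects, so pulling them back along $f$ lands you in general presheaves where you would need to understand the interaction of $L_{n}$ with base change---exactly what you are trying to prove. The hand-wave ``distributivity of condition (d) propagates to the localization'' hides the whole difficulty.

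The paper takes a much cleaner route that avoids $L_{n}$ entirely: it shows that the inclusion $\PrSeg(\bbTn;\mathcal{X},\mathcal{U})\hookrightarrow\mathcal{P}(\bbTn;\mathcal{X})$ \emph{preserves coproducts and the initial object}. That is, if each $Y_{i}$ is a reduced Segal object, then the pointwise coproduct $\coprod_{i}Y_{i}$ in $\mathcal{P}(\bbTn;\mathcal{X})$ is already a reduced Segal object, so no localization is needed. Reducedness is immediate since $\mathcal{U}$ is closed under colimits; the Segal condition amounts to commuting a coproduct past an iterated pullback over objects of $\mathcal{U}$, which is exactly what Lemma~\ref{lem:pbcoprod} (a direct consequence of conditions (c) and (d) in Definition~\ref{defn:sble}) provides. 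Once coproducts in $\PrSeg$ are computed pointwise, universality of coproducts over $c^{*}U$ follows immediately from condition (d) applied at each $I\in\bbTn$. This is both simpler and more robust than your proposed approach, and I recommend you adopt it.
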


Before we give the proof of this proposition, we need some technical lemmas:
\begin{lemma}\label{lem:pbcoprod}
  Let $(\mathcal{X}, \mathcal{U})$ be a presentable \icat{} with good
  constants. Suppose given maps of sets $f\colon A \to B$ and $g
  \colon C \to B$, objects $X_{a} \in \mathcal{X}$ for $a \in A$,
  $Y_{c} \in \mathcal{X}$ for $c \in C$, $U_{b} \in \mathcal{U}$ for
  $b \in B$, and morphisms $\phi_{a}\colon X_{a} \to U_{f(a)}$ and
  $\psi_{c} \colon Y_{c} \to U_{g(c)}$ in $\mathcal{X}$ for all $a \in
  A$ and $c \in C$. Then the natural map
  \[ \coprod_{(a,b,c) \in A \times_{B} C} X_{a} \times_{U_{b}} Y_{c}
  \to \left(\coprod_{a \in A} X_{a}\right) \times_{\left(\coprod_{b
        \in B} U_{b}\right)} \left(\coprod_{c \in C} Y_{c}\right)\]
  is an equivalence in $\mathcal{X}$.
\end{lemma}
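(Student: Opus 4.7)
The plan is to distribute the pullback on the right-hand side over both coproducts and then identify the resulting summands with those on the left. The crucial enabling fact is that $\coprod_b U_b$ lies in $\mathcal{U}$ by axiom (b), placing any pullback with this base squarely in the setting where axiom (d) applies, despite the two pulled-back objects $\coprod_a X_a$ and $\coprod_c Y_c$ being arbitrary objects of $\mathcal{X}$. Concretely, I would first use axiom (d) twice: pullback along $\coprod_c Y_c \to \coprod_b U_b$ has target in $\mathcal{U}$ and hence preserves coproducts, giving
\[
\Bigl(\coprod_a X_a\Bigr) \times_{\coprod_b U_b} \Bigl(\coprod_c Y_c\Bigr) \simeq \coprod_a \Bigl(X_a \times_{\coprod_b U_b} \coprod_c Y_c\Bigr),
\]
and a second application to pullback along $X_a \to U_{f(a)} \to \coprod_b U_b$ identifies each inner term with $\coprod_c \bigl(X_a \times_{\coprod_b U_b} Y_c\bigr)$. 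It then suffices to prove each summand satisfies $X_a \times_{\coprod_b U_b} Y_c \simeq X_a \times_{U_{f(a)}} Y_c$ when $f(a) = g(c)$ and is $\emptyset$ otherwise: the off-diagonal summands drop out since $\emptyset$ is neutral for coproducts, leaving exactly the $A \times_B C$-indexed left-hand side.

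Factoring through $U_{f(a)}$ and $U_{g(c)}$ and pasting pullbacks reduces the remaining claim to computing $U_{f(a)} \times_{\coprod_b U_b} U_{g(c)}$. Writing $\coprod_b U_b \simeq U_{f(a)} \amalg V$ with $V := \coprod_{b \neq f(a)} U_b \in \mathcal{U}$, axiom (c) gives $U_{f(a)} \times_{\coprod_b U_b} V \simeq \emptyset$. When $f(a) \neq g(c)$, the map $U_{g(c)} \to \coprod_b U_b$ factors through $V$, and pasting together with the preservation of initial objects under pullback along $U_{g(c)} \to V$ (axiom (d)) forces $U_{f(a)} \times_{\coprod_b U_b} U_{g(c)} \simeq \emptyset$. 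When $f(a) = g(c)$, applying (d) to pullback along the inclusion $U_{f(a)} \hookrightarrow \coprod_b U_b$ decomposes
\[
U_{f(a)} \simeq \bigl(U_{f(a)} \times_{\coprod_b U_b} U_{f(a)}\bigr) \amalg \bigl(U_{f(a)} \times_{\coprod_b U_b} V\bigr),
\]
whose second summand is $\emptyset$, so $U_{f(a)} \times_{\coprod_b U_b} U_{f(a)} \simeq U_{f(a)}$ and a final round of pasting yields $X_a \times_{\coprod_b U_b} Y_c \simeq X_a \times_{U_{f(a)}} Y_c$ as required.

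I expect the main obstacle to be this mixed computation of $U_{f(a)} \times_{\coprod_b U_b} U_{g(c)}$. Axiom (c) only directly handles two-term coproducts, so it must be bootstrapped to the full $B$-indexed coproduct by splitting off $U_{f(a)}$; and in the equal case one implicitly needs the summand inclusion $U_{f(a)} \hookrightarrow \coprod_b U_b$ to be a monomorphism in $\mathcal{X}$ rather than merely in $\mathcal{U}$. Both points rely essentially on applying axiom (d) to morphisms with target in $\mathcal{U}$ rather than only to morphisms within $\mathcal{U}$, which is the specific strength of the ``good constants'' formulation.
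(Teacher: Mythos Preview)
Your proof is correct and follows essentially the same approach as the paper: distribute the pullback over both coproducts using axiom (d), then compute $U_{i} \times_{\coprod_{b} U_{b}} U_{j}$ by splitting off $U_{i}$ and invoking axioms (c) and (d) exactly as you describe. The only structural difference is that the paper first treats the special case $f = g = \id_{B}$ and then reduces the general case to it via the fibres $A_{b}$ and $C_{b}$, whereas you handle the general case directly by decomposing into an $A \times C$-indexed coproduct; your route is slightly more streamlined but the content is the same.
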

\begin{proof}
  For $b \in B$, let $A_{b}$ and $C_{b}$ denote the
  fibres of $f$ and $g$ at $b$. Then 
  condition (d) in Definition~\ref{defn:sble} gives equivalences
\[ \coprod_{(a,b,c) \in A \times_{B} C} X_{a} \times_{U_{b}} Y_{c}
\simeq \coprod_{b \in B} \coprod_{(a,c) \in A_{b} \times C_{b}} X_{a}
\times_{U_{b}} Y_{c} \simeq \coprod_{b \in B} \left(\coprod_{a \in A_{b}} X_{a}\right) \times_{ U_{b}}
  \left(\coprod_{c \in C_{b}} Y_{c}\right),\]
\[ \left(\coprod_{a \in A} X_{a}\right) \times_{\left(\coprod_{b
        \in B} U_{b}\right)} \left(\coprod_{c \in C} Y_{c}\right)
  \simeq \coprod_{b',b'' \in B} \left(\coprod_{a \in A_{b'}} X_{a}\right) \times_{\left(\coprod_{b
        \in B} U_{b}\right)} \left(\coprod_{c \in C_{b''}}
    Y_{c}\right).\]
  Let $\tilde{X}_{b}:= \amalg_{a \in A_{b}} X_{a}$ and $\tilde{Y}_{b} := \amalg_{c
    \in C_{b}} Y_{c}$, then it remains to show that 
  \[ \tilde{X}_{b'} \times_{\left(\coprod_{b \in B}
      U_{b}\right)} \tilde{Y}_{b''} \simeq
  \begin{cases}
    \emptyset, & b' \neq b'',\\
    \tilde{X}_{b'} \times_{U_{b'}} \tilde{Y}_{b'}, & b' = b''.
  \end{cases}
  \]
  Since $\tilde{X}_{b'} \times_{\left(\coprod_{b \in B} U_{b}\right)} \tilde{Y}_{b''}
  \simeq \tilde{X}_{b'} \times_{U_{b'}} U_{b'} \times_{\left(\coprod_{b \in B}
      U_{b}\right)} U_{b''} \times_{U_{b''}} \tilde{Y}_{b''}$ and
  pullbacks over
  objects in $\mathcal{U}$ preserve the initial object, it is enough to show
  that
  \[ U_{b'} \times_{\left(\coprod_{b \in B}
      U_{b}\right)} U_{b''} \simeq
  \begin{cases}
    \emptyset, & b' \neq b'',\\
    U_{b'}, & b' = b''
  \end{cases}
  \]
  To see this
  we observe that, setting $V :=
  \coprod_{b \neq b'} U_{b}$, for $b' \neq b''$ we have
  \[ U_{b'} \times_{\left(\coprod_{b \in B}
      U_{b}\right)} U_{b''} \simeq U_{b'} \times_{U_{b'} \amalg V} V
  \times_{V} U_{b''} \simeq \emptyset \times_{V} U_{b''} \simeq \emptyset,\]
  using that coproducts in $\mathcal{U}$ are disjoint and pullbacks in
  $\mathcal{U}$ preserve the initial object. For $i = j$ we
  have
  \[ U_{b'} \simeq U_{b'} \times_{U_{b'} \amalg V} (U_{b'} \amalg V)
  \simeq (U_{b'} \times_{U_{b'} \amalg V} U_{b'}) \amalg (U_{b'}
  \times_{U_{b'} \amalg V} V) \simeq U_{b'} \times_{U_{b'} \amalg V}
  U_{b'}.\qedhere\]
\end{proof}

\begin{lemma}\label{lem:coprodSeg}
  Given a set $S$ and objects $Y_{i} \in \PrSeg(\bbTn; \mathcal{X}, \mathcal{U})$ for
  $i \in S$, the coproduct $Y := \coprod_{i \in S} Y_{i}$ in
  $\mathcal{P}(\bbTn; \mathcal{X})$ is a reduced Segal
  $\bbTn$-object.
\end{lemma}
\begin{proof}
  Since $\mathcal{U}$ is closed under colimits in
  $\mathcal{X}$, the object $Y$ is reduced. Applying
  Lemma~\ref{lem:pbcoprod} we see that for $I =
  [i](I_{1},\ldots,I_{i})$ the natural map
  \[ Y(I) \to Y([1](I_{1})) \times_{Y(C_{0})} \cdots \times_{Y(C_{0})}
  Y([1](I_{i})) \]
  is an equivalence. By Proposition~\ref{propn:indSegcond} this
  implies by induction on $n$ that $Y$ is a Segal
  object.
\end{proof}

\begin{proof}[Proof of Proposition~\ref{propn:PrSeggoodconst}]
  The \icat{} $\PrSeg(\bbTn; \mathcal{X}, \mathcal{U})$ fits in a
  commutative diagram
  \[
  \begin{tikzcd}
    \PrSeg(\bbTn; \mathcal{X}, \mathcal{U}) \arrow{d} \arrow{r}& \PSeg(\bbTn;
    \mathcal{X}) \arrow{d} \\
    \PR(\bbTn; \mathcal{X}, \mathcal{U}) \arrow{d} \arrow{r}& \mathcal{P}(\bbTn;
    \mathcal{X}) \arrow{d} \\
    \mathcal{P}(\bbG_{n-1}; \mathcal{U}) \arrow{r} & \mathcal{P}(\bbG_{n-1}; \mathcal{X}).
  \end{tikzcd}
  \]
  where both squares are Cartesian. Moreover, the bottom horizontal
  and the two right vertical functors are right adjoints between
  presentable \icats{}. By \cite{HTT}*{Theorem 5.5.3.18} limits in the
  \icat{} $\Pr^{\mathrm{R}}$ of presentable \icats{} and right
  adjoints are computed in that of large \icats{}, hence all \icats{} in this
  diagram are presentable and all functors are right adjoints. This
  proves (i).

  Since $\bbTn$ is weakly contractible (as it has a terminal object)
  the image of the constant presheaf functor $c^{*} \colon \mathcal{U}
  \to \mathcal{P}(\bbTn; \mathcal{U}) \to \mathcal{P}(\bbTn;
  \mathcal{X})$ is fully faithful. Constant presheaves on objects in
  $\mathcal{U}$ satisfy the Segal condition by Corollary~\ref{cor:GnIwc},
  so this functor factors through $\PrSeg(\bbTn; \mathcal{X},
  \mathcal{U})$, which gives (ii).

  For (iii), we already know conditions (a) and (c) in
  Definition~\ref{defn:sble}. Limits in $\PrSeg(\bbTn; \mathcal{X}, \mathcal{U})$
  are computed in $\mathcal{P}(\bbTn; \mathcal{X})$, i.e.\ objectwise,
  and colimits are given by the localizations of the corresponding
  colimits in $\mathcal{P}(\bbTn; \mathcal{X})$; since constant
  presheaves on objects in $\mathcal{U}$ are already local, this
  implies condition (b). It remains to check condition (d), 
  i.e.\ given maps $Y_{i}\to c^{*}U$ for $i \in S$ we need to show that the natural
  map
  \[ \coprod_{i} X \times_{c^{*}U} Y_{i} \to X \times_{c^{*}U}
  \coprod_{i}Y_{i}\] is an equivalence. By Lemma~\ref{lem:coprodSeg}
  these coproducts can be computed in $\mathcal{P}(\bbTn;
  \mathcal{X})$, so it suffices to show that for $I \in \bbTn$ we have that
  \[ \coprod_{i} X(I) \times_{U} Y_{i}(I) \to X(I) \times_{U}
  \coprod_{i}Y_{i}(I)\]
  is an equivalence, which is true since $U$ is in $\mathcal{U}$.
\end{proof}

\begin{defn}
  For $(\mathcal{X}, \mathcal{U})$ a presentable \icat{} with good
  constants, we write $\PrSeg(\bbTn \times \bbT_{m}; \mathcal{X},
  \mathcal{U})$ for the full subcategory of $\mathcal{P}(\bbTn \times
  \bbT_{m}; \mathcal{X})$ corresponding to $\PrSeg(\bbTn;
  \PrSeg(\bbT_{m}; \mathcal{X}, \mathcal{U}),
  \mathcal{U})$. Similarly, we define
  $\PrSeg(\bbT_{n_{1}} \times \cdots \times \bbT_{n_{k}}; \mathcal{X},
  \mathcal{U})$ and $\PrSeg(\bbT_{n_{1},\mathrm{i}} \times \cdots
  \times \bbT_{n_{k},\mathrm{i}}; \mathcal{X}, \mathcal{U})$ by induction.
\end{defn}

\begin{ex}
  The \icat{} $\PrSeg(\simp^{n}; \mathcal{S})$ is the \icat{} of
  Barwick's \emph{$n$-fold Segal spaces} \cite{BarwickThesis}. More
  generally, $\PrSeg(\simp^{n}; \mathcal{X}, \mathcal{U})$ gives
  Lurie's $n$-fold $\mathcal{U}$-Segal spaces from
  \cite{LurieGoodwillie}.
\end{ex}

\section{The Free Reduced Segal $\bbTn$-Object Monad}\label{sec:monad}
Our goal in this section is to show that the \icat{}
$\PrSeg(\bbTn; \mathcal{X}, \mathcal{U})$ is the \icat{} of algebras
for a monad on $\PR(\bbGn; \mathcal{X}, \mathcal{U})$, and to
understand this monad explicitly. This is closely related to the
arguments used by Berger in the proof of \cite{BergerNerve}*{Theorem
  1.12}. Before we state our precise result, we must introduce some
notation:
\begin{defn}
  For $I \in \bbTn$, let $\Act(I)$ denote the set of active morphisms
  $I \to J$ in $\bbTn$. A morphism $f \colon I' \to I$ determines a
  map of sets $f^{*} \colon \Act(I) \to \Act(I')$ by taking $\phi
  \colon I \to J$ to the active morphism $\phi' \colon I' \to J'$ that
  gives the (unique) active-inert factorization of $I' \to I \to J$. Since this
  factorization is unique, it is easy to see that this determines a
  functor $\Act \colon \bbTn^{\op} \to \Set$.
\end{defn}

\begin{defn}
  Define $\iota_{n} \colon \bbT_{n-1} \to \bbTn$ inductively by taking
  $\iota_{1} \colon * = \bbT_{0} \to \bbT_{1} = \simp$ to be the
  inclusion of $[0]$ and setting \[\iota_{n}([m](I_{1},\ldots,I_{m})) =
  [m](\iota_{n-1}(I_{1}),\ldots,\iota_{n-1}(I_{m})).\] Notice that
  $\iota_{n}$ is fully faithful. We write $\iota_{k}^{n} := \iota_{n}
  \circ \cdots \circ \iota_{k+1} \colon \bbT_{k} \to \bbTn$.
\end{defn}

\begin{propn}\label{propn:Tnmonad} Let $(\mathcal{X}, \mathcal{U})$ be
  a presentable \icat{} with good constants.
  \begin{enumerate}[(i)]
  \item The functor $i_{n}^{*} \colon \PrSeg(\bbTn; \mathcal{X},
    \mathcal{U}) \to \PrSeg(\bbTni; \mathcal{X}, \mathcal{U})$ has a
    left adjoint $F_{n}$.
  \item The adjunction $F_{n} \dashv i_{n}^{*}$ is monadic.
  \item The monad $T_{n} := i_{n}^{*}F_{n}$ on $\PrSeg(\bbTni;
    \mathcal{X}, \mathcal{U})$ satisfies
    \[ T_{n}X(I) \simeq \coprod_{I \to J \in \Act(I)} X(J).\] In
    particular,
    \[ T_{n}X(C_{k}) \simeq \coprod_{J \in \bbT_{k}} X(\iota_{k}^{n}J).\]
  \end{enumerate}
\end{propn}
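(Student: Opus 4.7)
The plan is to define the left adjoint $F_{n}$ explicitly by the formula in (iii), verify it lands in reduced Segal $\bbTn$-objects, and then deduce (ii) from Barr--Beck--Lurie; this handles all three parts together.

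First I would set $F_{n}X(I) := \coprod_{\phi \in \Act(I)} X(J_{\phi})$ where $\phi \colon I \to J_{\phi}$, with contravariant functoriality built from the active-inert factorization system: a morphism $f \colon I' \to I$ in $\bbTn$ sends $\phi \in \Act(I)$ to the active part $f^{*}\phi \in \Act(I')$ of the factorization of $\phi \circ f$, and on the $\phi$-summand it acts by applying $X$ to the inert part $J_{f^{*}\phi} \to J_{\phi}$. The unit $X \to i_{n}^{*}F_{n}X$ is the inclusion of the summand indexed by the identity (which is simultaneously active and inert).

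The crucial technical step is checking that $F_{n}X$ is a reduced Segal $\bbTn$-object. Reducedness reduces to identifying $\Act(C_{k}) \cong \mathrm{Ob}(\bbT_{k})$, an easy induction on $n$ starting from $\gamma_{n}(C_{k}) = [1](\gamma_{n-1}(C_{k-1}))$, which also shows that the active map out of $C_{k}$ corresponding to $J \in \bbT_{k}$ targets $\iota_{k}^{n}J$; for $k < n$, every cell of $\iota_{k}^{n}J$ has dimension at most $k < n$, so the Segal condition on $X$ and closure of $\mathcal{U}$ under limits force $X(\iota_{k}^{n}J) \in \mathcal{U}$, and hence $F_{n}X(C_{k}) \in \mathcal{U}$ by closure of $\mathcal{U}$ under coproducts. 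For the Segal condition at a general $I$, one examines
\[ \lim_{C \in \bbGnI^{\op}} \coprod_{\psi \in \Act(C)} X(J_{\psi}).\]
This limit decomposes as an iterated pullback whose base objects are $F_{n}X(C_{i})$'s with $i < n$, all lying in $\mathcal{U}$, so Lemma~\ref{lem:pbcoprod} applies iteratively to commute the coproduct past the limit. The resulting index set consists of compatible families of active morphisms, which by uniqueness of active-inert factorization correspond bijectively to $\Act(I)$, and for each such $\phi$ the inner limit $\lim_{C} X(J_{\psi_{C}})$ equals $X(J_{\phi})$ by Lemma~\ref{lem:actSeg}.

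For the universal property, any extension $\tilde\alpha \colon F_{n}X \to Y$ of a map $\alpha \colon X \to i_{n}^{*}Y$ is forced on the $\phi$-summand to be the composite $X(J_{\phi}) \xrightarrow{\alpha(J_{\phi})} Y(J_{\phi}) \xrightarrow{\phi^{*}} Y(I)$, which conversely defines an extension; this establishes the adjunction and yields (i) and (iii). Part (ii) then follows from Barr--Beck--Lurie: $i_{n}^{*}$ is conservative because $\bbTni$ and $\bbTn$ share the same objects, so an equivalence after restriction is a pointwise equivalence, and it trivially preserves $i_{n}^{*}$-split geometric realizations since these are absolute colimits. The main obstacle throughout is the Segal-condition check for $F_{n}X$, whose proof crucially depends on the good-constants axioms to license the iterated application of Lemma~\ref{lem:pbcoprod}.
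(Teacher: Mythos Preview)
Your approach is correct and essentially identical to the paper's: both show that the explicit formula already lands in reduced Segal objects (using the good-constants axioms via Lemma~\ref{lem:pbcoprod} for the Segal condition, and closure of $\mathcal{U}$ for reducedness), and both invoke Barr--Beck--Lurie for monadicity. The only organizational difference is that the paper first observes $F_{n} = L_{n}\,i_{n,!}$ abstractly and then identifies $i_{n,!}$ with your formula via a cofinality argument (so the adjunction is free), whereas you define $F_{n}$ by the formula and verify the universal property by hand; your identification of the index set as compatible families of active maps is exactly the content of the paper's Lemma~\ref{lem:ActSeg}.
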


The proof relies on a simple description of the left Kan extension
functor $i_{n,!}$, which we prove first:
\begin{lemma}\label{lem:inLKan}
  The functor $i_{n,!} \colon \mathcal{P}(\bbTni; \mathcal{X}) \to
    \mathcal{P}(\bbTn; \mathcal{X})$ can be described explicitly as
    \[ i_{n,!}F(I) \simeq \coprod_{I \to J \in \Act(I)} F(J). \] In
    particular, $i_{n,!}F(C_{k}) \simeq \coprod_{I \in \ob \bbT_{k}}
    F(\iota_{k}^{n}(I))$.
\end{lemma}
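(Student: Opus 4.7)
The plan is to apply the pointwise formula for left Kan extension and then exploit the active-inert factorization system to decompose the indexing comma category into a disjoint union indexed by $\Act(I)$. I would begin by writing the pointwise formula
\[ i_{n,!}F(I) \simeq \mathrm{colim}_{(J, \phi) \in \mathcal{K}} F(J), \]
where $\mathcal{K}$ is the comma category whose objects are pairs $(J \in \bbTni, \phi \colon I \to J \text{ in } \bbTn)$ and whose morphisms $(J, \phi) \to (J', \phi')$ are inert maps $\gamma \colon J' \to J$ in $\bbTn$ satisfying $\gamma \circ \phi' = \phi$. The morphism direction (with $\gamma$ going from $J'$ to $J$) is inherited via the passage to opposite categories when computing left Kan extensions of presheaves, and it is what will ultimately make $(K_\alpha, \mathrm{id})$ a \emph{terminal} object of each piece of the decomposition rather than initial.

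Next, I would invoke the active-inert factorization system to write each $\phi$ uniquely as $I \xto{a_\phi} K_\phi \xto{i_\phi} J$ with $a_\phi$ active and $i_\phi$ inert. For a morphism $(J, \phi) \to (J', \phi')$ given by inert $\gamma$, the equality $\phi = \gamma \circ i_{\phi'} \circ a_{\phi'}$ is another active-inert factorization of $\phi$, so strict uniqueness forces $a_\phi = a_{\phi'}$ and $i_\phi = \gamma \circ i_{\phi'}$. Hence $\mathcal{K}$ splits as a disjoint union $\coprod_{\alpha \in \Act(I)} \mathcal{E}_\alpha$, where $\mathcal{E}_\alpha$ consists of pairs $(J, i \colon K_\alpha \to J)$ with $i$ inert. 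In each $\mathcal{E}_\alpha$, the pair $(K_\alpha, \mathrm{id}_{K_\alpha})$ is terminal, since a morphism $(J, i) \to (K_\alpha, \mathrm{id})$ is precisely an inert $\gamma \colon K_\alpha \to J$ with $\gamma = i$, which exists uniquely. Cofinality of the terminal object then yields $\mathrm{colim}_{\mathcal{E}_\alpha} F \simeq F(K_\alpha)$, and summing over $\Act(I)$ gives the desired formula.

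For the special case $I = C_k$, I would prove by induction on $n$ that $\Act(C_k)$ is in bijection with $\mathrm{ob}(\bbT_k)$ via $J \mapsto$ the unique active map $C_k \to \iota_k^n(J)$. The case $k = 0$ is immediate; for $k > 0$, the inductive description $C_k = [1](\gamma_{n-1}(C_{k-1}))$ reduces an active map $C_k \to [m](I_1, \ldots, I_m)$ to a choice of $m \geq 0$ (determining the unique active $[1] \to [m]$) together with active maps $C_{k-1} \to I_j$ in $\bbT_{n-1}$ for each $j$; by the inductive hypothesis these correspond to $J_j' \in \bbT_{k-1}$ with $I_j = \iota_{k-1}^{n-1}(J_j')$, assembling to an object $[m](J_1', \ldots, J_m') \in \bbT_k$ whose image under $\iota_k^n$ matches the target. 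The main obstacle I expect is the variance bookkeeping: one must verify that $(K_\alpha, \mathrm{id})$ is terminal (not initial) in $\mathcal{E}_\alpha$, since this is essential for cofinality to evaluate the colimit at $F(K_\alpha)$ and produce a coproduct rather than a limit or some more complicated expression.
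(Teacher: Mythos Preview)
Your proposal is correct and follows essentially the same argument as the paper. The paper phrases the key step as cofinality of the inclusion $\Act(I) \hookrightarrow (\bbTniop)_{/I}$ (verified via \cite{HTT}*{Theorem 4.1.3.1} by observing that the overcategory at each object is the contractible category of active--inert factorizations), whereas you equivalently decompose the comma category as $\coprod_{\alpha} \mathcal{E}_\alpha$ with each $\mathcal{E}_\alpha$ having a terminal object; these are two ways of saying the same thing, and your inductive treatment of the case $I = C_k$ simply spells out what the paper asserts in one sentence.
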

\begin{proof}
  We first show that the inclusion
  $\Act(I) \to (\bbTniop)_{/I} := \bbTniop \times_{\bbTnop}
  (\bbTnop)_{/I}$
  is cofinal. By \cite{HTT}*{Theorem 4.1.3.1} this is equivalent to
  the category $(\Act(I))_{/X}$ being weakly contractible for each
  $X = (J, f \colon I \to J)$ in $(\bbTni)_{I/}$. But this category 
  consists of active-inert factorizations of $f$, and so is
  contractible as this factorization is unique. Hence the left Kan extension
  $i_{n,!}F$ is indeed given by \[i_{n,!}F(I) \simeq
  \colim_{(I \to J) \in (\bbTniop)_{/I}} F(J) \simeq \coprod_{I \to J
    \in \Act(I)} F(J).\]
  If $I = C_{k}$ then the only objects of $\bbTn$ that admit an active
  map from $C_{k}$ are those in the image of the fully faithful
  functor $\iota_{k}^{n}\colon  \bbT_{k} \to \bbT_{n}$ (and these active maps
  are unique), which gives the expression for $i_{n,!}F(C_{i})$.
\end{proof}

We need one more observation:
\begin{lemma}\label{lem:actSeg}
  The functor $\Act \colon \bbTnop \to \Set$ is a Segal $\bbTn$-object.
\end{lemma}
\begin{proof}
  We prove this by induction on $n$, using the criterion of
  Proposition~\ref{propn:indSegcond}. For $I = [i](I_{1},\ldots,I_{i})
  \in \bbTn$, the definition of active morphisms in $\bbTn$
  immediately implies that $\Act(I) \cong \Act([1](I_{1})) \times \cdots
  \times \Act([1](I_{i}))$ and $\Act(C_{0}) \cong *$, which gives condition
  (1). To prove (2), suppose $I = [1](J)$ for some $J \in \bbT_{n-1}$. Then
  it is immediate from the definition of active maps in $\bbTn$ that
  \[ \Act(I) \cong \coprod_{i = 0}^{\infty} \Act'(J)^{\times i}\]
  (where for clarity we write $\Act'$ for the $\bbT_{n-1}$-version
  of $\Act$). By assumption we have $\Act'(J) \cong \lim_{C \to J \in \bbG_{n-1/J}^{\op}}
  \Act'(C)$, hence as limits commute and coproducts in $\Set$ commute
  with connected limits, we have isomorphisms
  \[ \begin{split}\Act(I) & \cong \coprod_{i=0}^{\infty} \left(\lim_{C \to J \in \bbG_{n-1/J}^{\op}}
      \Act'(C)\right)^{\times i} \cong  \lim_{C \to J \in \bbG_{n-1/J}^{\op}} \left
      (\coprod_{i = 0}^{\infty} \Act'(C)^{\times i}\right) \\ & \cong \lim_{C \to J \in \bbG_{n-1/J}^{\op}} \Act([1](C)),
  \end{split}
  \]
  which is condition (2).
\end{proof}

\begin{proof}[Proof of Proposition~\ref{propn:Tnmonad}]
  Let $L_{n}$ denote the localization functor from $\mathcal{P}(\bbTn;
  \mathcal{X})$ to $\PrSeg(\bbTn; \mathcal{X}, \mathcal{U})$; then
  $L_{n}i_{n,!}$ clearly restricts to a left adjoint to $i_{n}^{*}$,
  which gives (i).

  To see that the adjunction is monadic it suffices by
  \cite{HA}*{Theorem 4.7.4.5} to prove that $i_{n}^{*}$ detects
  equivalences and that colimits of $i_{n}^{*}$-split simplicial
  objects exist in $\PrSeg(\bbTn; \mathcal{X}, \mathcal{U})$ and are
  preserved by $i_{n}^{*}$. Since $\bbTni$ is a subcategory of $\bbTn$
  containing all the objects it is clear that $i_{n}^{*}$ detects
  equivalences. Suppose we have an $i_{n}^{*}$-split simplicial object
  $X_{\bullet}$ in $\PrSeg(\bbTn; \mathcal{X}, \mathcal{U})$,
  i.e.\ $i_{n}^{*}X_{\bullet}$ extends to a split simplicial object
  $X'_{\bullet} \colon \simp_{-\infty}^{\op} \to \PrSeg(\bbTni;
  \mathcal{X}, \mathcal{U})$ (where $\simp_{-\infty}$ is as in
  \cite{HA}*{Definition 4.7.3.1}). If we consider $X_{\bullet}$ as a
  diagram in $\mathcal{P}(\bbTn; \mathcal{X})$ with colimit $X$, then
  this colimit is preserved by $i^{*}_{n} \colon \mathcal{P}(\bbTn;
  \mathcal{X}) \to \mathcal{P}(\bbTni; \mathcal{X})$ (since this
  functor is a left adjoint). But by \cite{HA}*{Remark 4.7.3.3}, the
  diagram $X'_{\bullet}$ is a colimit diagram also when viewed as a
  diagram in $\mathcal{P}(\bbTni; \mathcal{X})$, so $i^{*}_{n}X \simeq
  X'_{-\infty}$. This means that $X$ is a reduced Segal $\bbTn$-object, and so
  it is also the colimit of $X_{\bullet}$ in $\PrSeg(\bbTn;
  \mathcal{X}, \mathcal{U})$, and its image in $\PrSeg(\bbTni;
  \mathcal{X}, \mathcal{U})$ is $X'_{-\infty}$, as required. This
  proves (ii).

  To prove (iii), we will show that if $X \in \PrSeg(\bbTni;
  \mathcal{X}, \mathcal{U})$ then $i_{n,!}X$ is a reduced Segal
  $\bbTn$-space, hence $F_{n}X$ is just given by the left Kan
  extension $i_{n,!}X$:

  To see that $i_{n,!}$ is reduced, we observe that for $i < n$ the
  expression for $i_{n,!}F(C_{i})$ in Lemma~\ref{lem:inLKan} is a
  coproduct of limits of objects in $\mathcal{U}$, and hence is also
  in $\mathcal{U}$ since this is closed in $\mathcal{X}$ under all
  limits and colimits.

  Now since $X$ is a Segal $\bbTni$-object we have, using
  Proposition~\ref{propn:Gnfeq},
  \[ i_{n,!}X(I) \simeq \coprod_{I \to J \in \Act(I)} X(J) \simeq
  \coprod_{I \to J \in \Act(I)} \lim_{\alpha \in \bbGnI}
  X(J_{\alpha}).\] These limits over $\bbGnI$ can be rewritten as
  iterated pullbacks over objects in $\mathcal{U}$, and by
  Lemma~\ref{lem:actSeg} we have that $\Act(I)$ is equivalent to
  $\lim_{\alpha \colon C \to I \in \bbGnI} \Act(C)$. Applying
  Lemma~\ref{lem:pbcoprod} iteratively we can then conclude that the
  natural map
  \[ \coprod_{I \to J \in \Act(I)} \lim_{\alpha \in
    \bbGnI} X(J_{\alpha}) \to \lim_{\alpha \colon C \to I \in \bbGnI}
  \coprod_{C \to J_{\alpha}} X(J_{\alpha})\]
  is an equivalence. Here the target is equivalent to $\lim_{\alpha
    \colon C \to I \in \bbGnI} i_{n,!}X(C)$, i.e.\ $i_{n,!}X$ satisfies
  the Segal condition. The expression for $F_{n}X(C_{i})$ is then
  immediate from Lemma~\ref{lem:inLKan}.
\end{proof}

\section{Comparison}\label{sec:comp}
Our goal in this section is to prove our comparison result. More
precisely, we will show:
\begin{thm}\label{thm:main} 
  Let $\tau_{1,n}\colon \simp \times \bbTn \to \bbT_{n+1}$ be the
  functor determined by sending $([n], I)$ to $[n](I, \ldots,
  I)$. Then composition with $\tau_{1,n}$ induces, for $(\mathcal{X},
  \mathcal{U})$ a presentable \icat{} with good constants, an
  equivalence
  \[\tau_{1,n}^{*} \colon \PrSeg(\bbT_{n+1}; \mathcal{X},
  \mathcal{U}) \isoto \PrSeg(\simp \times \bbTn; \mathcal{X},
  \mathcal{U}).\]
\end{thm}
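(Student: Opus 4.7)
The plan is to reduce the equivalence to an equivalence of monads on the common base $\PR(\bbG_{n+1}; \mathcal{X}, \mathcal{U})$. First, I would verify that $\tau_{1,n}^{*}$ restricts to a functor between the reduced Segal subcategories. The key observations are that $\tau_{1,n}$ preserves the inert/active factorization systems and that $\tau_{1,n}([0], I) = [0]()$ is independent of $I$. The former, combined with Lemma~\ref{lem:actSeg} applied to the active decomposition of $[m](I, \ldots, I)$, ensures that the Segal conditions in both the $\simp$- and $\bbTn$-directions are inherited from the Segal condition on the source; the latter accounts for the constancy condition on the $\simp$-coordinate.

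Next, both sides should be exhibited as monadic over $\PR(\bbG_{n+1}; \mathcal{X}, \mathcal{U})$. For the LHS this is Proposition~\ref{propn:Tnmonad} directly, giving a monad $T_{n+1}$ with explicit formula $T_{n+1}X(C_{k}) \simeq \coprod_{J \in \bbT_{k}} X(\iota_{k}^{n+1}J)$. For the RHS, Proposition~\ref{propn:PrSeggoodconst} implies that $(\PrSeg(\bbTn; \mathcal{X}, \mathcal{U}), \mathcal{U})$ is again a presentable \icat{} with good constants, so Proposition~\ref{propn:Tnmonad} applied to $\bbT_{1} = \simp$ in this ambient presents it as monadic over $\PR(\bbG_{1}; \PrSeg(\bbTn; \mathcal{X}, \mathcal{U}), \mathcal{U})$. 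Composing with the monadic adjunction of Proposition~\ref{propn:Tnmonad} for $\bbTn$ (applied on the $C_{1}$-coordinate) and using the natural identification $\PR(\bbG_{1}; \PR(\bbGn; \mathcal{X}, \mathcal{U}), \mathcal{U}) \simeq \PR(\bbG_{n+1}; \mathcal{X}, \mathcal{U})$, which reflects that $\bbG_{n+1}$ arises from $\bbG_{1}$ and $\bbGn$ by identifying $C_{1}$ of $\bbG_{1}$ with $C_{0}$ of $\bbGn$, with the reduced constancy condition on $C_{0}$ of $\bbG_{1}$ implementing the relations $s_{i}s_{i-1} = t_{i}s_{i-1}$, we obtain the RHS as algebras for a composite monad $\widetilde{T}_{n+1}$ over the same base.

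The functor $\tau_{1,n}^{*}$ is compatible with these monadic forgetful functors and induces the identity on underlying $\bbG_{n+1}$-data under the preceding identifications, so by the monadic comparison theorem it suffices to produce a natural equivalence of monads $T_{n+1} \simeq \widetilde{T}_{n+1}$. Unwinding $\widetilde{T}_{n+1}$ by applying Proposition~\ref{propn:Tnmonad}(iii) twice and commuting coproducts past pullbacks via Lemma~\ref{lem:pbcoprod}, one should obtain the same formula as for $T_{n+1}$: a coproduct indexed by active maps out of the source cell. The bijection between the two index sets is exactly the wreath-like description of $\bbT_{n+1}$ encoded by $\tau_{1,n}$, namely that active maps out of cells decompose as an active map in $\simp$ together with a tuple of active maps in $\bbTn$.

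The hardest step will be this final comparison: verifying that the two monads agree as \emph{monads} rather than merely as endofunctors, which requires checking compatibility of units and multiplications. This amounts to a careful combinatorial matching of the index sets of active maps together with iterated applications of Lemma~\ref{lem:pbcoprod} to handle the nested coproducts-over-pullbacks that appear when expanding $\widetilde{T}_{n+1}$ on a general object, with Lemma~\ref{lem:ActSeg} ensuring the index sets decompose compatibly with the globular descent.
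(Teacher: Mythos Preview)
Your overall strategy matches the paper's proof closely: verify that $\tau_{1,n}^{*}$ lands in the reduced Segal subcategory, set up a commutative square over a common base equivalent to $\PR(\bbG_{n+1}; \mathcal{X}, \mathcal{U})$ with monadic vertical functors, and compare the two free functors. The paper carries this out essentially as you describe, with Proposition~\ref{propn:T1nmonad} establishing monadicity on the $\simp \times \bbTn$ side directly (rather than as a composite of two monadic adjunctions, though your route would also work).

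However, the step you flag as hardest---checking that $T_{n+1}$ and $\widetilde{T}_{n+1}$ agree \emph{as monads}, with units and multiplications---is not actually needed, and the paper avoids it entirely. Once you have the commutative square
\[
\begin{array}{ccc}
\PrSeg(\bbT_{n+1}; \mathcal{X}, \mathcal{U}) & \xrightarrow{\tau_{1,n}^{*}} & \PrSeg(\simp \times \bbTn; \mathcal{X}, \mathcal{U}) \\
\downarrow i_{n+1}^{*} & & \downarrow i_{1,n}^{*} \\
\PrSeg(\bbT_{n+1,\mathrm{i}}; \mathcal{X}, \mathcal{U}) & \xrightarrow{\sim} & \PrSeg(\Di \times \bbTni; \mathcal{X}, \mathcal{U})
\end{array}
\]
with the bottom map an equivalence and the verticals monadic right adjoints, you can invoke \cite{HA}*{Corollary 4.7.4.16}: the top map is an equivalence provided that for each $X$ the canonical comparison map $F_{1,n}X \to \tau_{1,n}^{*}F_{n+1}X$ (induced by the unit of the left-hand adjunction) is an equivalence. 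This is a statement purely about the underlying objects---it suffices to check it on cells $C_{k}$ since $i_{1,n}^{*}$ is conservative---and the compatibility of monad multiplications is handled automatically by the square of adjunctions. The explicit formula comparison you outline (decomposing $\ob\bbT_{k}$ by the underlying object of $\simp$ and applying Lemma~\ref{lem:pbcoprod}) is exactly what the paper does, but only at this objectwise level.
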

Iterating this result, we get:
\begin{cor}
  Let $\tau_{k,n} \colon \simp^{k} \times \bbT_{n} \to \bbT_{n+k}$ be
  defined inductively as 
  \[ \simp^{k} \times \bbT_{n} \xto{\id_{\simp} \times \tau_{k-1,n}}
  \simp \times \bbT_{n+k-1} \xto{\tau_{1,n+k-1}} \bbT_{n+k}.\] Then
  for $(\mathcal{X}, \mathcal{U})$ any presentable \icat{} with good
  constants the functor
  \[ \tau_{k,n}^{*} \colon \PrSeg(\simp^{k} \times \bbT_{n};
  \mathcal{X}, \mathcal{U}) \to \PrSeg(\bbT_{n+k}; \mathcal{X},
  \mathcal{U})\]
  is an equivalence. \qed
\end{cor}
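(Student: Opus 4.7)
The plan is to induct on $k \ge 1$. The base case $k = 1$ is exactly Theorem~\ref{thm:main}, so suppose the result is known for $k-1$ (and for all $n$). By the inductive definition of $\tau_{k,n}$ we have
\[ \tau_{k,n}^{*} \;=\; (\id_{\simp} \times \tau_{k-1,n})^{*} \circ \tau_{1,n+k-1}^{*}, \]
and Theorem~\ref{thm:main}, applied with $n+k-1$ in place of $n$, gives that $\tau_{1,n+k-1}^{*}$ is an equivalence between $\PrSeg(\bbT_{n+k}; \mathcal{X}, \mathcal{U})$ and $\PrSeg(\simp \times \bbT_{n+k-1}; \mathcal{X}, \mathcal{U})$. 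It therefore suffices to show that $(\id_{\simp} \times \tau_{k-1,n})^{*}$ is an equivalence.

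Next I would unwind the iterated $\PrSeg$ definition. Using the recursive definition of $\PrSeg$ for products, both sides can be written as $\PrSeg(\simp;\mathcal{Y}_{i}, \mathcal{U})$ with
\[ \mathcal{Y}_{1} := \PrSeg(\bbT_{n+k-1}; \mathcal{X}, \mathcal{U}), \qquad \mathcal{Y}_{2} := \PrSeg(\simp^{k-1} \times \bbT_{n}; \mathcal{X}, \mathcal{U}), \]
and by iterated application of Proposition~\ref{propn:PrSeggoodconst} both pairs $(\mathcal{Y}_{i}, \mathcal{U})$ are presentable \icats{} with good constants. Under the currying equivalence $\mathcal{P}(\simp \times \mathcal{C}; \mathcal{X}) \simeq \mathcal{P}(\simp; \mathcal{P}(\mathcal{C}; \mathcal{X}))$ the functor $(\id_{\simp} \times \tau_{k-1,n})^{*}$ is identified with postcomposition with the inductive-hypothesis equivalence $\tau_{k-1,n}^{*} \colon \mathcal{Y}_{1} \simeq \mathcal{Y}_{2}$.

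The main obstacle is to promote the coefficient equivalence $\tau_{k-1,n}^{*}$ to an equivalence of the $\PrSeg(\simp; -, \mathcal{U})$ subcategories. What is needed is that $\tau_{k-1,n}^{*}$ is an equivalence of \emph{pairs}, i.e.\ that it intertwines the constant-presheaf embeddings $c^{*} \colon \mathcal{U} \to \mathcal{Y}_{i}$ from Proposition~\ref{propn:PrSeggoodconst}(ii); this is immediate since pulling a constant presheaf with value $U \in \mathcal{U}$ back along $\tau_{k-1,n}$ returns the constant presheaf with value $U$. Once this is in hand, the Segal and reducedness conditions cutting out $\PrSeg(\simp; \mathcal{Y}_{i}, \mathcal{U}) \subset \mathcal{P}(\simp; \mathcal{Y}_{i})$ only reference limits in $\mathcal{Y}_{i}$ and the subcategory $\mathcal{U}$, both of which are transported by the equivalence of pairs. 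Hence $(\id_{\simp} \times \tau_{k-1,n})^{*}$ restricts to the desired equivalence, completing the inductive step.
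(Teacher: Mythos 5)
Your proof is correct and is essentially the argument the paper intends: the corollary is stated as an immediate iteration of Theorem~\ref{thm:main}, and you have simply written out that induction, correctly identifying the one point that needs checking (that the coefficient equivalence $\tau_{k-1,n}^{*}$ respects the constant subcategories $\mathcal{U}$, so that it induces an equivalence on the $\PrSeg(\simp;-,\mathcal{U})$ subcategories via Proposition~\ref{propn:PrSeggoodconst}). No gaps; note only that the displayed functor in the corollary's statement has its source and target swapped relative to the theorem, and your factorization $\tau_{k,n}^{*} = (\id_{\simp}\times\tau_{k-1,n})^{*}\circ\tau_{1,n+k-1}^{*}$ uses the correct (contravariant) direction.
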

In particular, taking $\mathcal{X}$ to be an $\infty$-topos and $n =
0$ we get an equivalence between the \icat{} $\PrSeg(\simp^{k};
\mathcal{X})$ of $k$-fold Segal spaces in $\mathcal{X}$ and the
\icat{} $\PSeg(\bbT_{k}; \mathcal{X})$ of Segal $\bbT_{k}$-objects in
$\mathcal{X}$.

\begin{remark}
  Similarly, applying Theorem~\ref{thm:main} inductively we get for
  any sequence of positive integers $(n_{1},\ldots, n_{k})$ an
  equivalence between $\PrSeg(\bbT_{n_{1}} \times \cdots \times
  \bbT_{n_{k}}; \mathcal{X}, \mathcal{U})$ and
  $\PrSeg(\bbT_{n_{1}+\cdots+n_{k}}; \mathcal{X}, \mathcal{U})$.
\end{remark}

To prove Theorem~\ref{thm:main}, we will use the following analogue of
Proposition~\ref{propn:Tnmonad}:
\begin{propn}\label{propn:T1nmonad}\
  \begin{enumerate}[(i)]
  \item Let $i_{1,n}:= i_{1} \times i_{n} \colon \Di \times \bbTni \to
    \simp \times \bbTn$. The functor \[i_{1,n}^{*} \colon \PrSeg(\simp
    \times \bbTn; \mathcal{X}, \mathcal{U}) \to \PrSeg(\Di \times
    \bbTni; \mathcal{X}, \mathcal{U})\] has a left adjoint $F_{1,n}$.
  \item The adjunction $F_{1,n} \dashv i_{1,n}^{*}$ is monadic.
  \item The monad $T_{1,n} := i_{1,n}^{*}F_{1,n}$ on $\PrSeg(\Dni
    \times \bbTni;
    \mathcal{X}, \mathcal{U})$ satisfies
    \[ T_{1,n}X([0], C_{0}) \simeq X([0], C_{0}),\]
    \[ T_{1,n}X([1], C_{k}) \simeq \coprod_{j = 0}^{\infty}
    F_{n}\tilde{X}(C_{k}) \times_{X([0], C_{0})} \cdots\times_{X([0],
      C_{0})} F_{n}\tilde{X}(C_{k}),\] where $\tilde{X}:= X([1],
    \blank)$ and the factor $F_{n}\tilde{X}(C_{k})$ occurs $j$ times.
  \end{enumerate}
\end{propn}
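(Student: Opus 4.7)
The strategy parallels the proof of Proposition~\ref{propn:Tnmonad}. For (i), the functor $i_{1,n}^{*}$ is limit-preserving between presentable \icats{} and hence has a left adjoint $F_{1,n}$ by the adjoint functor theorem; equivalently, $F_{1,n} = L_{1,n} \circ (i_{1,n})_{!}$ where $L_{1,n}$ denotes the reduced-Segal localization on $\mathcal{P}(\simp \times \bbTn; \mathcal{X})$. For (ii), Barr-Beck-Lurie applies just as in Proposition~\ref{propn:Tnmonad}(ii): $i_{1,n}^{*}$ is conservative since $i_{1,n}$ is identity on objects, and preservation of $i_{1,n}^{*}$-split simplicial colimits is verified by passing to $\mathcal{P}(\simp \times \bbTn; \mathcal{X})$, where $i_{1,n}^{*}$ is a left adjoint.

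For (iii), the plan is to compute $F_{1,n}$ as an iterated application of Proposition~\ref{propn:Tnmonad}. Set $\mathcal{Y} := \PrSeg(\bbTn; \mathcal{X}, \mathcal{U})$, which is a presentable \icat{} with good constants $\mathcal{U}$ by Proposition~\ref{propn:PrSeggoodconst}(iii). The iterative definition identifies $\PrSeg(\simp \times \bbTn; \mathcal{X}, \mathcal{U}) \simeq \PrSeg(\simp; \mathcal{Y}, \mathcal{U})$, and $\PrSeg(\Di \times \bbTni; \mathcal{X}, \mathcal{U})$ can be described as the \icat{} of ``quivers in $\PrSeg(\bbTni; \mathcal{X}, \mathcal{U})$ over $\mathcal{U}$'', i.e., of pairs $(U, \tilde{X})$ with $U \in \mathcal{U}$, $\tilde{X} \in \PrSeg(\bbTni; \mathcal{X}, \mathcal{U})$, and source/target maps $\tilde{X} \rightrightarrows c^{*}U$. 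Under these identifications the restriction $i_{1,n}^{*}$ factors as (a) the restriction along $i_{n}^{*}\colon \PrSeg(\bbTn) \to \PrSeg(\bbTni)$ of the inner \icat{}, followed by (b) the $n = 1$ restriction $i_{1}^{*}\colon \PrSeg(\simp; \mathcal{Y}, \mathcal{U}) \to \PrSeg(\Di; \mathcal{Y}, \mathcal{U})$. The left adjoint to (a), applied to $(U, \tilde{X})$, returns $(U, F_{n}\tilde{X})$ as a quiver in $\mathcal{Y}$ over $U$, with source/target obtained by composing $F_{n}$ applied to source/target with the counit $F_{n}c^{*}U \to c^{*}U$. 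The left adjoint to (b) is given by Proposition~\ref{propn:Tnmonad}(iii) for $n = 1$ applied to $(\mathcal{Y}, \mathcal{U})$: $F_{1}^{\mathcal{Y}}Y([1]) \simeq \coprod_{j \geq 0} Y([1])^{\times_{Y([0])} j}$ in $\mathcal{Y}$. Composing the two adjunctions yields $F_{1,n} X([1]) \simeq \coprod_{j \geq 0} (F_{n}\tilde{X})^{\times_{c^{*}U} j}$ in $\mathcal{Y}$; evaluation at $C_{k}$ is pointwise in $\mathcal{X}$ by Proposition~\ref{propn:PrSeggoodconst}(iii), and $c^{*}U(C_{k}) = U$, which produces the claimed formula. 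The identity $T_{1,n}X([0], C_{0}) = U$ follows because the $[0]$-generator is preserved throughout.

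The main obstacle is correctly identifying the left adjoint to (a). The naive choice, pointwise post-composition with $F_{n}$, does not preserve reducedness of the outer Segal $\Di$-object, because $F_{n}$ does not send constant presheaves to constant presheaves: by Proposition~\ref{propn:Tnmonad}(iii), $F_{n}(c^{*}U)(C_{k}) \simeq \bbT_{k} \cdot U$, which depends on $k$. The fix, made possible by the Segal decomposition of a reduced Segal $\Di$-object into generator data $(U, \tilde{X})$, is to apply $F_{n}$ only to the morphism generator $\tilde{X}$ and to re-route source/target through the counit $F_{n}c^{*}U \to c^{*}U$. With this step in place, the remainder is formal manipulation of the monad formulas in Proposition~\ref{propn:Tnmonad}(iii), with Lemma~\ref{lem:pbcoprod} ensuring that fibre products and coproducts over constants in $\mathcal{U}$ are computed compatibly.
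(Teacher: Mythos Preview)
Your approach is essentially the same as the paper's. The paper also factors $i_{1,n}$ into an outer $i_{1}$-part (handled by Proposition~\ref{propn:Tnmonad} applied to $(\mathcal{Y},\mathcal{U})$) and an inner $i_{n}$-part, and it makes your ``quiver over $\mathcal{U}$'' picture precise by exhibiting evaluation at $C_{0}$ as a Cartesian fibration $\PrSeg(\bbG_{1}\times\bbTn;\mathcal{X},\mathcal{U})\to\mathcal{U}$ whose fibre at $U$ is $\PrSeg(\bbTn;\mathcal{X},\mathcal{U})_{/U\times U}$; your counit re-routing is then justified by an explicit slice-adjunction lemma (if $L\dashv R$ then $L_{d}\dashv R_{d}$ on slices over $d$, with $L_{d}$ given by $L$ followed by the counit) together with \cite{HA}*{Proposition 7.3.2.5} to assemble the fibrewise left adjoints into a global one.
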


For the proof we need the following observations:
\begin{lemma}\label{lem:sliceadj}
  Suppose $L : \mathcal{C} \rightleftarrows \mathcal{D} : R$ is an
  adjunction. Then for any $d \in \mathcal{D}$ there is an adjunction
  \[ L_{d} : \mathcal{C}_{/Rd} \rightleftarrows \mathcal{D}_{/d} :
  R_{d},\]
  where $L_{d}(x \to Rd)$ is the composite $Lx \to LRd \to d$ using
  the counit, and $R_{d}(y \to d)$ is $Ry \to Rd$.
\end{lemma}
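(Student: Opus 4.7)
The plan is to produce the adjunction by exhibiting a natural equivalence of mapping spaces $\mathrm{Map}_{\mathcal{D}_{/d}}(L_d x, y) \simeq \mathrm{Map}_{\mathcal{C}_{/Rd}}(x, R_d y)$, natural in $(x \to Rd) \in \mathcal{C}_{/Rd}$ and $(y \to d) \in \mathcal{D}_{/d}$, which is a standard criterion for an adjunction of \icats{}.

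The key technical input is the description of mapping spaces in a slice \icat{} as fibers: for any \icat{} $\mathcal{E}$ and $e \in \mathcal{E}$, the mapping space $\mathrm{Map}_{\mathcal{E}_{/e}}((a \to e),(a' \to e))$ is the fiber of the forgetful map $\mathrm{Map}_\mathcal{E}(a, a') \to \mathrm{Map}_\mathcal{E}(a, e)$ over the chosen structure map $a \to e$. Applying this on both sides, the left-hand mapping space becomes the fiber of $\mathrm{Map}_\mathcal{D}(Lx, y) \to \mathrm{Map}_\mathcal{D}(Lx, d)$ over the composite $Lx \to LRd \to d$ that defines $L_d x$, while the right-hand side is the fiber of $\mathrm{Map}_\mathcal{C}(x, Ry) \to \mathrm{Map}_\mathcal{C}(x, Rd)$ over the given map $x \to Rd$.

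Next I would invoke the adjunction equivalence $\mathrm{Map}_\mathcal{C}(x, R(-)) \simeq \mathrm{Map}_\mathcal{D}(Lx, -)$, which, applied naturally to the morphism $y \to d$ in $\mathcal{D}$, produces a commutative square identifying the two horizontal maps above. Passing to fibers then yields the desired equivalence, provided the chosen basepoints correspond. This basepoint compatibility is precisely the statement that the adjunction equivalence sends $x \to Rd$ to its adjunct $Lx \to LRd \to d$, which holds by the counit description of the adjunction equivalence.

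The only real obstacle is the bookkeeping needed to upgrade this pointwise equivalence of mapping spaces to an actual adjunction, i.e.\ to check naturality in $x$ as well as $y$; this follows from the naturality of the original adjunction equivalence in the source variable, so no additional work is required beyond carefully tracking how the basepoints transform under functoriality.
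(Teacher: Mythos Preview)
Your proposal is correct and follows essentially the same approach as the paper: both arguments identify mapping spaces in the slice as fibers and then use the commutative square coming from the original adjunction equivalence to conclude. The only packaging difference is that the paper first constructs the slicewise counit $\eta_d \colon L_d R_d \to \id$ from the original counit $\eta$ and then invokes \cite{HTT}*{Proposition 5.2.2.8} to certify it as an adjunction counit, which absorbs the naturality-in-$x$ bookkeeping you flag at the end.
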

\begin{proof}
  Let $\eta \colon LR \to \id$ be the counit for the adjunction. This
  determines a natural transformation $\eta_{d} \colon L_{d}R_{d} \to
  \id$, and the map \[\Map_{\mathcal{C}_{/Rd}}(x, R_{d}y) \to
  \Map_{\mathcal{D}_{/LRd}}(L_{d}x, L_{d}R_{d}y) \to
  \Map_{\mathcal{D}_{/d}}(L_{d}x, y)\] is the map on fibres at $x \to Rd$ of
  the commutative square
 \nolabelcsquare{\Map_{\mathcal{C}}(x, Ry)}{\Map_{\mathcal{D}}(Lx,
   y)}{\Map_{\mathcal{C}}(x, Rd)}{\Map_{\mathcal{D}}(Lx, d)}
 induced by $\eta$. Here both horizontal maps are equivalences, since
 $\eta$ is the counit of the adjunction $L \dashv R$, hence so is the
 map on fibres. The natural transformation $\eta_{d}$ is therefore the
 counit of an adjunction $L_{d}\dashv R_{d}$ by
 \cite{HTT}*{Proposition 5.2.2.8}.
\end{proof}

\begin{lemma}\label{lem:funG1fibre}
  If $\mathcal{C}$ is an \icat{} with finite products, then for all $x
  \in \mathcal{C}$ there is a pullback square
  \nolabelsmallcsquare{\mathcal{C}_{/x \times x}}{\Fun(\bbG_{1}^{\op},
    \mathcal{C})}{\{x\}}{\mathcal{C}}
  where the right vertical map is given by evaluation at $C_{0}$.
\end{lemma}
\begin{proof}
  Let $\Lambda$ denote the category $0 \to \infty \from 1$,
  i.e. $\{0,1\}^{\triangleright}$. Then for $x,y \in \mathcal{C}$ we
  have by \cite{spans}*{Lemma 6.4} a pullback square
  \nolabelsmallcsquare{\mathcal{C}_{/x \times y}}{\Fun(\Lambda^{\op},
    \mathcal{C})}{\{x,y\}}{\mathcal{C} \times \mathcal{C},}
  where the right vertical map is given by evaluation at $0$ and
  $1$. Now $\bbG_{1}$ can be identified with the pushout $\Lambda
  \amalg_{\{0,1\}} *$, so we also have a pullback square
  \nolabelsmallcsquare{\Fun(\bbG_{1}^{\op},
    \mathcal{C})}{\Fun(\Lambda^{\op},
    \mathcal{C})}{\mathcal{C}}{\mathcal{C} \times \mathcal{C},}
  where the bottom horizontal map is the diagonal map. Putting these
  two squares together gives the result.
\end{proof}

\begin{proof}[Proof of Proposition~\ref{propn:T1nmonad}]
  The functor $i_{1,n} \colon \Di \times \bbTni \to \simp \times
  \bbTn$ factors as the composite of inclusions $i'_{1,n} := \id
  \times i_{n} \colon \Di \times \bbTni \to \Di \times \bbTn$ and
  $i''_{1,n} := i_{1} \times \id \colon \Di \times \bbTn \to \simp
  \times \bbTn$. Here $(i''_{1,n})^{*}$ is just $i_{1}^{*}$ applied to
  the presentable \icat{} with good constants $(\PrSeg(\bbTn;
  \mathcal{X}, \mathcal{U}), \mathcal{U})$, so by
  Proposition~\ref{propn:Tnmonad} it has a left adjoint, given by
  $i_{1,!}$. To prove (i) it therefore suffices to show that
  $(i'_{1,n})^{*}$ has a left adjoint.
  
  Let $\PrSeg(\bbG_{1} \times \bbTn; \mathcal{X}, \mathcal{U})$ denote
  the full subcategory of
  $\mathcal{P}(\bbG_{1} \times \bbTn; \mathcal{X}, \mathcal{U})$
  spanned by those presheaves $X$ such that $X(C_{0})$ is a constant
  presheaf valued in $\mathcal{U}$ and $X(C_{1})$ is in
  $\PrSeg(\bbTn; \mathcal{X}, \mathcal{U})$. Then right Kan extension
  along $\gamma_{1} \times \id$ gives an equivalence between
  $\PrSeg(\bbG_{1} \times \bbTn; \mathcal{X}, \mathcal{U})$ and
  $\PrSeg(\Di \times \bbTn; \mathcal{X}, \mathcal{U})$. Similarly,
  $\PrSeg(\Di \times \bbTni; \mathcal{X}, \mathcal{U})$ is equivalent
  to the analogous full subcategory
  $\PrSeg(\bbG_{1} \times \bbTni; \mathcal{X}, \mathcal{U})$ of
  $\mathcal{P}(\bbG_{1} \times \bbTni; \mathcal{X},
  \mathcal{U})$. Under these equivalences $(i'_{1,n})^{*}$ corresponds
  to the functor $j^{*}$ where $j := \id_{\bbG_{1}} \times i_{n} \colon
  \bbG_{1} \times \bbTni \to \bbG_{1} \times \bbTn$.

  We then have a commutative triangle \smallopctriangle{\PrSeg(\bbG_{1}
    \times \bbTn; \mathcal{X}, \mathcal{U})}{\PrSeg(\bbG_{1} \times
    \bbTni; \mathcal{X}, \mathcal{U})}{\mathcal{U},}{j^{*}}{}{} 
  where the diagonal morphisms are given by evaluation at $C_{0}$,
  since this takes values in the constant presheaves on $\bbTn$ valued
  in $\mathcal{U}$.
  
  By Lemma~\ref{lem:funG1fibre} we can identify the morphism on fibres at $U \in
  \mathcal{U}$ with the functor
  \[ \PrSeg(\bbTn; \mathcal{X}, \mathcal{U})_{/U \times U} \to
  \PrSeg(\bbTni; \mathcal{X}, \mathcal{U})_{/U \times U}\]
  given by composing with $i_{n}$, where $U \times U$ denotes the
  constant presheaf with this value. Lemma~\ref{lem:sliceadj}
  therefore implies that the functor $j^{*}$ has a left adjoint on the
  fibre over each $U \in \mathcal{U}$, given by applying $F_{n}$ and
  composing with the counit map to the constant presheaf. By
  \cite{HA}*{Proposition 7.3.2.6} this implies that $(i'_{1,n})^{*}$
  has a left adjoint, provided the two diagonal functors are
  Cartesian fibrations and $j^{*}$ preserves Cartesian morphisms.

  The functor
  $\txt{ev}_{C_{0}} \colon \PrSeg(\bbG_{1} \times \bbTn; \mathcal{X},
  \mathcal{U}) \to \mathcal{U}$
  has a right adjoint $R$, taking $U \in \mathcal{U}$ to the right Kan
  extension along $\{C_{0}\} \hookrightarrow \bbG_{1}$ of the constant
  presheaf on $U$. (Thus $R(U)(C_{1})$ is the constant presheaf on
  $U \times U$.) To prove that $\txt{ev}_{C_{0}}$ is a Cartesian
  fibration we can therefore apply the criterion of
  \cite{nmorita}*{Corollary 4.52}: We must show that given
  $X \in \PrSeg(\bbG_{1} \times \bbTn; \mathcal{X}, \mathcal{U})$ and
  a morphism $f \colon U \to X(C_{0})$ in $\mathcal{U}$, if we define
  $X'$ as the pullback of $X \to R(X(C_{0}))$ along
  $R(U) \to R(X(C_{0}))$, then the morphism
  $X'(C_{0}) \to R(U)(C_{0}) \simeq U$ is an equivalence; this is
  clear since pullbacks in
  $\PrSeg(\bbG_{1} \times \bbTn; \mathcal{X}, \mathcal{U})$ are
  computed objectwise. The map $X' \to X$ is a Cartesian morphism over
  $f$ with target $X$; the same argument shows that the other functor
  is likewise a Cartesian fibration, and $j^{*}$ preserves
  Cartesian morphisms as it preserves pullbacks. This completes the
  proof of (i).

  (ii) now follows by the same argument as in the proof of
  Proposition~\ref{propn:Tnmonad}(ii), and (iii) by
  Proposition~\ref{propn:Tnmonad} and our description of the left
  adjoints to $(i'_{1,n})^{*}$ and $(i''_{1,n})^{*}$.
\end{proof}

\begin{proof}[Proof of Theorem~\ref{thm:main}]
  The functor $\tau_{1,n}^{*}$ takes reduced Segal
  $\bbT_{n+1}$-objects to reduced Segal $\simp \times \bbTn$-objects:
  condition (1) in Proposition~\ref{propn:indSegcond} implies the
  Segal condition in the $\simp$-coordinate, and condition (2) implies
  it in the $\bbT_{n}$-coordinate; since $\tau_{1,n}([0], \blank)$ is
  constant at $[0]()$, we see that $\tau_{1,n}^{*}$ also preserves
  reducedness.
  
  We then have a commutative diagram
  \[
  \begin{tikzcd}
    \PrSeg(\bbT_{n+1}; \mathcal{X}, \mathcal{U})
    \arrow{r}{\tau_{1,n}^{*}} \arrow{d}{i_{n+1}^{*}} & \PrSeg(\simp
    \times \bbTn; \mathcal{X},
    \mathcal{U}) \arrow{d}{i_{1,n}^{*}} \\
    \PrSeg(\bbT_{n+1,\mathrm{i}}; \mathcal{X}, \mathcal{U})
    \arrow{r}{\tau^{*}_{1,n,\mathrm{i}}} \arrow{d}{\gamma_{n+1}^{*}} &
    \PrSeg(\Di \times \bbTni; \mathcal{X}, \mathcal{U})
    \arrow{d}{\gamma_{1,n}^{*}} \\
    \PR(\bbG_{n+1}; \mathcal{X}, \mathcal{U}) \arrow{r}{\beta_{n}^{*}}
    & \PR(\bbG_{1} \times \bbG_{n+1}; \mathcal{X}, \mathcal{U}),
  \end{tikzcd}
  \]
  where $\gamma_{1,n} := \gamma_{1} \times \gamma_{n}$ and $\beta$ is
  the restriction of $\tau_{1,n}$ to a functor
  $\colon \bbG_{1} \times \bbG_{n} \to \bbG_{n+1}$ (this sends
  $(C_{0}, C_{i})$ to $C_{0}$ and $(C_{1},C_{i})$ to $C_{i+1}$).

  We will now prove that the functor $\tau_{1,n,\mathrm{i}}^{*}$ is
  an equivalence. Observe that the vertical morphisms in the
  bottom square are equivalences, since the Segal presheaves on
  $\bbT_{n+1,\mathrm{i}}$ and $\Di \times \bbTni$ are precisely those
  presheaves that are right Kan extensions along $\gamma_{n+1}$ and
  $\gamma_{1} \times \gamma_{n}$ of presheaves on $\bbG_{n+1}$ and
  $\bbG_{1}\times \bbG_{n}$, respectively. It therefore suffices to
  show that the functor $\beta_{n}^{*}$ is an equivalence. The reduced
  presheaves on $\bbG_{1} \times \bbG_{n+1}$ are precisely those that
  are in the image under $\beta_{n}^{*}$ of
  $\PR(\bbG_{n+1}; \mathcal{X}, \mathcal{U})$, so to see this it is
  enough to prove that $\beta_{n}^{*}$ is fully faithful. Consider the
  left adjoint
  $\beta_{n,!} \colon \mathcal{P}(\bbG_{1}\times\bbG_{n}; \mathcal{X})
  \to \mathcal{P}(\bbG_{n+1}; \mathcal{X})$,
  given by left Kan extension along
  $\beta^{\op} \colon \bbG_{1}^{\op}\times\bbG_{n}^{\op} \to
  \bbG_{n+1}^{\op}$.
  The category $(\bbG_{1}^{\op}\times\bbG_{n}^{\op})_{/C_{k}}$ has a
  terminal object for every $k = 0,\ldots,n+1$, namely
  $((C_{1}, C_{i-1}), \beta(C_{1},C_{i-1}) \xto{\id} C_{i})$ for
  $i > 0$ and $(C_{0}, C_{0}), \beta(C_{0},C_{0}) \xto{\id} C_{0})$
  for $i = 0$. Thus for
  $F \in \mathcal{P}(\bbG_{1}\times\bbG_{n}; \mathcal{X})$ we
  have \[\beta_{!}F(C_{i}) \simeq
  \begin{cases}
    F(C_{1},C_{i-1}) & i > 0 \\
    F(C_{0}, C_{0}) & i = 0.
  \end{cases}
  \]
  The counit $\beta_{!}\beta^{*}\to \id$ is therefore an equivalence,
  which implies that $\beta^{*}$ is fully faithful, as required.

  The vertical maps in the top square above are monadic right
  adjoints by Proposition~\ref{propn:Tnmonad} and
  Proposition~\ref{propn:T1nmonad}. To see that $\tau_{1,n}^{*}$ is an
  equivalence it then suffices, by \cite{HA}*{Corollary 4.7.4.16}, to
  show that for every $X \in \PR(\bbG_{n+1}; \mathcal{X}, \mathcal{U}) \simeq
  \PrSeg(\bbT_{n+1,\mathrm{i}}; \mathcal{X}, \mathcal{U})$ the unit map $X \to
  i_{n+1}^{*}F_{n+1} \simeq i_{1,n}^{*}\tau_{1,n}^{*}F_{n+1}$ induces an
  equivalence $F_{1,n}X \isoto \tau_{1,n}^{*}F_{n+1}X$, or (since
  $i_{1,n}^{*}$ detects equivalences) the induced map
  $(F_{1,n}X)(C_{k}) \to (F_{n+1}X)(C_{k})$ is an equivalence for $k =
  0,\ldots,n+1$.

  To prove this we will rewrite our expression for
  $(F_{n+1}X)(C_{k})$ from Proposition~\ref{propn:Tnmonad}, which says
  \[ F_{n+1}X(C_{k}) \simeq \coprod_{I \in \ob\bbT_{k}} \iota_{k}^{n+1,*}X(I).\]
  Let $(\ob \bbT_{k})_{i}$ denote the subset of $\ob \bbT_{k}$
  consisting of objects of the form $[i](\cdots)$. 
  
  By Proposition~\ref{propn:indSegcond} we get for every object $I =
  [i](I_{1},\ldots,I_{i})$ in $\bbT_{k}$ an equivalence 
  \[ \iota_{k}^{n+1,*}X(I) \simeq \iota_{k}^{n+1,*}X(\sigma_{k}I_{1})
  \times_{\iota_{k}^{n+1,*}X(C_{0})}
  \cdots\times_{\iota_{k}^{n+1,*}X(C_{0})}\iota_{k}^{n+1,*}X(\sigma_{k}I_{i}),\]
  where $\sigma_{k} \colon \bbT_{k-1} \to \bbT_{k}$ is the functor
  $[1](\blank)$. There is a bijection
  $(\ob \bbT_{k})_{i} \cong (\ob \bbT_{k-1})^{\times (i-1)}$, and since coproducts
  over $\mathcal{U}$ are universal we can rewrite our expression for
  $F_{n}X(C_{k})$ as
  \[ \coprod_{i = 0}^{\infty} \left(\coprod_{I_{1} \in \bbT_{k-1}}
    \iota_{k}^{n+1,*}X(\sigma_{k}I_{1})\right) \times_{X(C_{0})} \cdots \times_{X(C_{0})} \left(\coprod_{I_{i} \in \bbT_{k-1}}
    \iota_{k}^{n+1,*}X(\sigma_{k}I_{i})\right).\]
  Here, as $\iota_{k}^{n+1}\sigma_{k} =
  \sigma_{n+1}\iota_{k-1}^{n}$, we have equivalences
  \[ \coprod_{I' \in
    \bbT_{k-1}} \iota_{k}^{n+1,*}X(\sigma_{k}I') \simeq \coprod_{I' \in
    \bbT_{k-1}} \iota_{k-1}^{n,*}(\sigma_{n+1}^{*}X)(I') \simeq F_{n}(\sigma_{n+1}^{*}X)(C_{k-1}).\]
  Comparing this to the expression for $F_{1,n}$ in
  Proposition~\ref{propn:T1nmonad} then completes the proof.
\end{proof}

\begin{remark}
  Let $E^{n}$ denote the nerve of the (contractible) category with $n$
  objects and a unique morphism between any two objects, viewed as
  a Segal space. Then a Segal space is \emph{complete} if it is local
  with respect to the map $E^{1} \to E^{0}$. We can then inductively
  define a Segal $\simp \times \bbT_{n}$-space $X$ to be complete if
  $X(\blank, C_{0})$ is a complete Segal space and $X([1], \blank)$ is
  a complete Segal $\bbT_{n}$-space, where a Segal $\bbT_{n}$-space
  $Y$ is complete if $\tau_{1,n-1}^{*}Y$ is a complete $\simp \times
  \bbT_{n-1}$-space. Expanding this out, it is easy to see that it
  recovers Rezk's definition of complete $\bbT_{n}$-spaces, and that
  under our equivalence $\PrSeg(\simp^{n}) \simeq \PSeg(\bbTn)$ the
  complete $\bbTn$-spaces correspond to the complete $n$-fold Segal
  spaces as defined by Barwick.
\end{remark} 

\begin{bibdiv}
\begin{biblist}
\bib{BarwickThesis}{book}{
  author={Barwick, Clark},
  title={$(\infty ,n)$-{C}at as a closed model category},
  note={Thesis (Ph.D.)--University of Pennsylvania},
  date={2005},
}

\bib{BarwickOpCat}{article}{
  author={Barwick, Clark},
  title={From operator categories to topological operads},
  eprint={arXiv:1302.5756},
  year={2013},
}

\bib{BarwickSchommerPriesUnicity}{article}{
  author={Barwick, Clark},
  author={Schommer-Pries, Christopher},
  title={On the unicity of the homotopy theory of higher categories},
  eprint={arXiv:1112.0040},
  year={2011},
}

\bib{BergerNerve}{article}{
  author={Berger, Clemens},
  title={A cellular nerve for higher categories},
  journal={Adv. Math.},
  volume={169},
  date={2002},
  number={1},
  pages={118--175},
}

\bib{BergerWreath}{article}{
  author={Berger, Clemens},
  title={Iterated wreath product of the simplex category and iterated loop spaces},
  journal={Adv. Math.},
  volume={213},
  date={2007},
  number={1},
  pages={230--270},
}

\bib{BergnerRezk2}{article}{
  author={Bergner, Julia E.},
  author={Rezk, Charles},
  title={Comparison of models for $(\infty ,n)$-categories {II}},
  date={2014},
  eprint={arXiv:1406.4182},
}

\bib{spans}{article}{
  author={Haugseng, Rune},
  title={Iterated spans and ``classical'' topological field theories},
  date={2014},
  eprint={arXiv:1409.0837},
}

\bib{nmorita}{article}{
  author={Haugseng, Rune},
  title={The higher {M}orita category of $E_{n}$-algebras},
  date={2014},
  eprint={arXiv:1412.8459},
  note={To appear in Geom. Topol.},
}

\bib{enrbimod}{article}{
  author={Haugseng, Rune},
  title={Bimodules and natural transformations for enriched $\infty $-categories},
  eprint={arXiv:1506.07341},
  journal={Homology Homotopy Appl.},
  volume={18},
  date={2016},
  pages={71--98},
}

\bib{HTT}{book}{
  author={Lurie, Jacob},
  title={Higher Topos Theory},
  series={Annals of Mathematics Studies},
  publisher={Princeton University Press},
  address={Princeton, NJ},
  date={2009},
  volume={170},
  note={Available at \url {http://math.harvard.edu/~lurie/papers/highertopoi.pdf}},
}

\bib{LurieGoodwillie}{article}{
  author={Lurie, Jacob},
  title={($\infty $,2)-Categories and the {G}oodwillie Calculus {I}},
  date={2009},
  eprint={http://math.harvard.edu/~lurie/papers/GoodwillieI.pdf},
}

\bib{HA}{book}{
  author={Lurie, Jacob},
  title={Higher Algebra},
  date={2014},
  note={Available at \url {http://math.harvard.edu/~lurie/papers/higheralgebra.pdf}},
}

\bib{RezkCSS}{article}{
  author={Rezk, Charles},
  title={A model for the homotopy theory of homotopy theory},
  journal={Trans. Amer. Math. Soc.},
  volume={353},
  date={2001},
  number={3},
  pages={973--1007 (electronic)},
}

\bib{RezkThetaN}{article}{
  author={Rezk, Charles},
  title={A Cartesian presentation of weak $n$-categories},
  journal={Geom. Topol.},
  volume={14},
  date={2010},
  number={1},
  pages={521--571},
}

\bib{WeberFamilial}{article}{
  author={Weber, Mark},
  title={Familial 2-functors and parametric right adjoints},
  journal={Theory Appl. Categ.},
  volume={18},
  date={2007},
  pages={No. 22, 665--732},
}

\end{biblist}
\end{bibdiv}

\end{document}